\newtheorem{theorem}{Theorem}[section]
\newtheorem{lemma}[theorem]{Lemma}
\newtheorem{proposition}[theorem]{Proposition}
\theoremstyle{definition}
\newtheorem{example}[theorem]{Example}
\newtheorem{remark}[theorem]{Remark}
\begin{document}

\title[Noncommutative invariant theory]
{Noncommutative invariant theory\\
of symplectic and orthogonal groups}

\author[Vesselin Drensky and Elitza Hristova]
{Vesselin Drensky and Elitza Hristova}
\address{Institute of Mathematics and Informatics,
Bulgarian Academy of Sciences,
Acad. G. Bonchev Str., Block 8,
1113 Sofia, Bulgaria}
\email{drensky@math.bas.bg, e.hristova@math.bas.bg}

\thanks{Partially supported by Project DFNP 17-90/28.07.2017
of the Young Researchers Program of the Bulgarian Academy of
Sciences.}

\subjclass[2010]{05E05; 15A72; 15A75; 16R10; 16R40.}

\keywords{Noncommutative invariant theory, relatively free algebras, Grassmann algebra, Hilbert series, Schur function}

\begin{abstract} We present a method for computing the Hilbert series of the algebra of invariants of
the complex symplectic and orthogonal groups acting on graded noncommutative algebras with homogeneous components
which are polynomial modules of the general linear group. We apply our method to compute the Hilbert series
for different actions of the symplectic and orthogonal groups on
the relatively free algebras of the varieties of associative algebras generated, respectively,
by the Grassmann algebra and the algebra of $2\times 2$ upper triangular matrices.
These two varieties are remarkable with the property that they are the only minimal varieties of exponent 2.
\end{abstract}

\maketitle

\section{Introduction}
Many of the results of the present paper hold oven an arbitrary field of characteristic 0.
But in the spirit of classical invariant theory we shall work over the field $\mathbb C$ of complex numbers.
A possible way to develop noncommutative invariant theory is the following. One considers
the tensor algebra
\[
T(W_m)=\sum_{n\geq 0}W_m^{\otimes_n}={\mathbb C}\oplus W_m\oplus (W_m\otimes W_m)\oplus (W_m\otimes W_m\otimes W_m)\oplus\cdots
\]
of the $m$-dimensional complex vector space $W_m={\mathbb C}X_m$, $m\geq 2$, with basis $X_m=\{x_1,\ldots,x_m\}$
with the canonical action of the general linear group $\text{GL}_m({\mathbb C})$. Then the action of $\text{GL}_m({\mathbb C})$ is extended diagonally
on $T(W_m)$. For a subgroup $G$ of $\text{GL}_m({\mathbb C})$ one studies the $G$-invariants of the factor algebra
$T(W_m)/I$, where $I$ is an ideal of $T(W_m)$ which is stable under the action of $\text{GL}_m({\mathbb C})$.
Maybe the most interesting algebras to study are the relatively free algebras $F_m({\mathfrak R})$
of varieties of unitary associative algebras $\mathfrak R$.
One considers the free unitary associative algebra ${\mathbb C}\langle X_m\rangle$ which is isomorphic
to the tensor algebra $T(W_m)$ and the ideal $I$ consists of all polynomial identities of the variety $\mathfrak R$.
Relatively free associative algebras share a lot of nice properties typical for polynomial algebras.
More generally, one may consider the free nonassociative (unitary or nonunitary) algebra ${\mathbb C}\{X_m\}$ modulo the ideal of the polynomial identities
of a variety $\mathfrak R$ of not necessarily associative algebras. For a subgroup $G$ of $\text{GL}_m({\mathbb C})$ one studies the algebra of $G$-invariants
\[
(T(W_m)/I)^G=\{f(W_m)\in T(W_m)/I\mid g(f)=f\text{ for all }g\in G\}.
\]
The algebra $(T(W_m)/I)^G$ is graded and its Hilbert (or Poincar\'e) series is
\[
H((T(W_m)/I)^G,z)=\sum_{n\geq 0}\dim (W_m^{\otimes_n}/(W_m^{\otimes_n}\cap I))^Gz^n.
\]
As in the case of classical invariant theory the computation of the Hilbert series of the algebra of $G$-invariants is one of the main problems
in noncommutative invariant theory.

In our paper we consider a more general situation. We have a direct sum of polynomial $\text{GL}_d({\mathbb C})$-modules
\[
W=\sum_{n\geq 0}W^{(n)}=W^{(0)}\oplus W^{(1)}\oplus W^{(2)}\oplus\cdots.
\]
Then $W$ has a canonical ${\mathbb N}_0$-grading with $W^{(n)}$ being the homogeneous component of degree $n$.
The $\text{GL}_d({\mathbb C})$-action induces an ${\mathbb N}_0^d$-grading. The homogeneous component $W^{(n,\alpha)}$ of $W^{(n)}$ of degree $\alpha=(\alpha_1,\ldots,\alpha_d)$
consists of all elements $w$ of $W^{(n)}$ with the property
\[
\text{diag}(\xi_1,\ldots,\xi_d)(w)=\xi_1^{\alpha_1}\cdots \xi_d^{\alpha_d}w,\text{ where }
\text{diag}(\xi_1,\ldots,\xi_d)=\left(\begin{matrix}
\xi_1&0&\cdots&0\\
0&\xi_2&\cdots&0\\
\vdots&\vdots&\ddots&\vdots\\
0&0&\cdots&\xi_d\\
\end{matrix}\right).
\]
(When we consider factor algebras of the free associative algebra ${\mathbb C}\langle X_d\rangle$ or, equivalently, of the tensor algebra of $W_d$
with the canonical action of $\text{GL}_d({\mathbb C})$,
the ${\mathbb N}_0^d$-grading of ${\mathbb C}\langle X_d\rangle$ is the usual one
which counts the number of entries of $x_i$ in the monomials of ${\mathbb C}\langle X_d\rangle$.)
Then the Hilbert series of $W$ is
\[
H_{\text{GL}_d}(W,T_d,z)=H_{\text{GL}_d}(W,t_1,\ldots,t_d,z)=\sum_{\alpha_i,n\geq 0}\dim W^{(n,\alpha)}t_1^{\alpha_1}\cdots t_d^{\alpha_d}z^n.
\]
It is easy to see that for any subgroup $G$ of $\text{GL}_d({\mathbb C})$ the Hilbert series $H_{\text{GL}_d}(W,T_d,z)$ determines the Hilbert series
\[
H(W^G,z)=\sum_{n\geq 0}\dim (W^{(n)})^Gz^n
\]
of the vector space of $G$-invariants
\[
W^G=\sum_{n\geq 0}(W^{(n)})^G.
\]
Recently Domokos and one of the authors of the present paper \cite{DD2} have shown that
if the Hilbert series $H_{\text{GL}_d}(W,T_d,z)$ is a rational function of a special kind (the so called nice rational function),
then the Hilbert series $H(W^G,z)$ is a rational function for a large class of groups $G$, including the cases
when $G$ is reductive or $G$ is a maximal unipotent subgroup of a reductive subgroup of $\text{GL}_m({\mathbb C})$.
In particular this holds when $W$ is a relatively free associative algebra $F_m({\mathfrak R})$ and $\mathfrak R$ is a proper subvariety
of the variety of associative algebras.

In our paper we consider an arbitrary $\displaystyle W=\sum_{n\geq 0}W^{(n)}$ which is a sum of polynomial $\text{GL}_d({\mathbb C})$-modules
and assume that we know the decomposition of all $W^{(n)}$ into a sum of irreducible components.
We present a method which allows to find the Hilbert series $H(W^G,z)$
when $G$ is one of the classical subgroups of $\text{GL}_d({\mathbb C})$ -- the symplectic group $\text{Sp}_d({\mathbb C})$ (when $d$ is even),
the orthogonal group $\text{O}_d({\mathbb C})$, and the special orthogonal group $\text{SO}_d({\mathbb C})$.
The approach is similar to the case when $G$ is the special linear group $\text{SL}_d({\mathbb C})$
or the unitriangular group $\text{UT}_d({\mathbb C})$ considered in \cite{BBDGK}.
Unfortunately we know the $\text{GL}_d({\mathbb C})$-module structure of $W$ in very few cases.
Examples of such $W$ are the relatively free algebras $F_d({\mathfrak R})$ where $\mathfrak R$ is the variety $\mathfrak G$
generated by the Grassmann (or exterior) algebra $E=\Lambda(W_{\infty})$ on the infinitely dimensional vector space $W_{\infty}$,
the variety ${\mathfrak T}$ generated by the algebra of $2\times 2$ upper triangular matrices,
the variety $\mathfrak M$ generated by the algebra of $2\times 2$ matrices,
the variety $\text{var}(E\otimes E)$ generated by the tensor square of $E$,
as well as the relatively free algebras of some varieties of Lie and Jordan algebras.
Other examples are the Grassmann algebra $E_d=\Lambda(W_d)$,
the universal enveloping algebra $U(F_d({\mathfrak N}_2))$ of the free nilpotent of class 2 Lie algebra $F_d({\mathfrak N}_2)$
and the $d$-generated generic Clifford algebra $\text{Cl}_d(W_p)$ of the $p$-dimensional vector space $W_p$, $p\in{\mathbb N}\cup{\infty}$.
If $W=T(W_m)/I$ is a factor algebra of the tensor algebra $T(W_m)$ and $W_m$ is a $\text{GL}_d({\mathbb C})$-module with $m\not=d$,
then the $\text{GL}_d({\mathbb C})$-module structure of $W$ may be quite complicated
and it is not a trivial task to find it. In this case if we know the $\text{GL}_d({\mathbb C})$-module structure of $W_m$
and the Hilbert series $H_{\text{GL}_m}(T(W_m)/I,T_m,z)$ with respect to the canonical ${\mathbb N}_0^m\times {\mathbb N}_0$-grading, then we can compute
the Hilbert series $H_{\text{GL}_d}(T(W_m)/I,T_d,z)$ with respect to the ${\mathbb N}_0^d\times {\mathbb N}_0$-grading
induced by the action of $\text{GL}_d({\mathbb C})$.
In the special case when the Hilbert series $H_{\text{GL}_m}(T(W_m)/I,T_m,z)$ is a nice rational function the methods described in \cite{BBDGK} allow
to find the decomposition of $T(W_m)/I$ as a $\text{GL}_d({\mathbb C})$-module and hence to compute the Hilbert series $H((T(W_m)/I)^G,z)$
for $G=\text{Sp}_d({\mathbb C}), \text{O}_d({\mathbb C})$, and $\text{SO}_d({\mathbb C})$.

As an illustration of our approach we apply our results to the algebra of invariants when the classical group acts
on the relatively free algebras $F_d({\mathfrak G})$ and $F_d({\mathfrak T})$.
The celebrated theorem of Regev \cite{R} gives the exponential growth of the codimension sequence $c_n({\mathfrak R})$, $n=1,2,\ldots$,
for any proper variety of associative algebras. Later Giambruno and Zaicev \cite{GZ1, GZ2} proved that the exponent
\[
\exp({\mathfrak R})=\limsup_{n\to\infty}\sqrt[n]{c_n({\mathfrak R})}
\]
exists and is a nonnegative integer. In \cite{GZ3} they described the minimal varieties $\mathfrak R$ of a given exponent, i.e.,
the varieties $\mathfrak R$ with the property $\exp({\mathfrak S})<\exp({\mathfrak R})$ for any proper subvariety $\mathfrak S$ of $\mathfrak R$.
It has turned out that $\mathfrak G$ and ${\mathfrak T}$ are the only minimal varieties of exponent 2.
But from the point of view of invariant theory
there is a big difference between $\mathfrak G$ and ${\mathfrak T}$. By a result of Domokos and one of the authors \cite{DD1}
the algebra of invariants $F_m^G({\mathfrak R})$ is finitely generated for any reductive group if and only if $\mathfrak R$ satisfies the polynomial identity
of Lie nilpotency $[x_1,\ldots,x_{c+1}]=0$ and this is the case of $\mathfrak G$.
For such varieties the recent paper \cite{DD3} suggests constructive methods which allow to find explicit sets of generators of $F_m^G({\mathfrak R})$.
The variety ${\mathfrak T}$ is the minimal variety of unitary associative algebras
with the property that there exists a reductive group $G$ such that $F_m^G({\mathfrak R})$ is not finitely generated.
Then we consider the more complicated case when $\text{GL}_d({\mathbb C})$ acts on $W_m$ in a noncanonical way and again compute the Hilbert series of
$F_m^G({\mathfrak G})$ and $F_m^G({\mathfrak T})$
for several actions of $\text{GL}_d({\mathbb C})$ and
for $G=\text{Sp}_d({\mathbb C}),\text{O}_d({\mathbb C}),\text{SO}_d({\mathbb C})$.

In a forthcoming paper we calculate the Hilbert series of the algebras of $G$-invariants for different actions of these three classical groups
on several $m$-generated algebras (also for $m\not=d$):
the relatively free algebras of the varieties of associative algebras $\mathfrak M$ and $\text{var}(E\otimes E)$
which are minimal in the class of varieties of exponent 4,
of three varieties of Lie algebras: the metabelian variety ${\mathfrak A}^2$, the center-by-metabelian variety $[{\mathfrak A}^2,{\mathfrak E}]$, and
the variety $\text{var}(\text{sl}_2({\mathbb C}))$ generated by the algebra of $2\times 2$ traceless matrices.
We calculate also the Hilbert series of $E_m^G$, $U^G(F_m({\mathfrak N}_2))$, and $\text{Cl}_m^G(W_p)$ for the same groups $G$.

\section{The method}
For a background and details on representation theory of the general linear group $\text{GL}_d({\mathbb C})$ we refer to the book by Macdonald \cite{Mc}.
Every polynomial $\text{GL}_d({\mathbb C})$-module is a direct sum of irreducible submodules.
The irreducible $\text{GL}_d({\mathbb C})$-modules are $V_d(\lambda)$, where
\[
\lambda=(\lambda_1,\ldots,\lambda_d),\quad \lambda_1\geq\cdots\geq \lambda_d\geq 0,
\]
is a partition in not more than $d$ parts. The Hilbert series
\[
H_{\text{GL}_d}(V_d(\lambda),T_d)=\sum_{\alpha_i\geq 0}\dim V_d^{(\alpha)}(\lambda)t_1^{\alpha_1}\cdots t_d^{\alpha_d}
\]
of $V_d(\lambda)$ describing the ${\mathbb N}_0^d$-grading induced by the action of $\text{GL}_d({\mathbb C})$ is equal to the Schur function
$s_{\lambda}(T_d)=s_{\lambda}(t_1,\ldots,t_d)$.
In particular, if
\[
W=\sum_{n\geq 0}W^{(n)}=\sum_{n\geq 0}\sum_{\lambda}m_{n\lambda}V_d(\lambda),
\]
where $m_{n\lambda}$ is the multiplicity of $V_d(\lambda)$ in the decomposition of $W^{(n)}$ into a sum of irreducible $\text{GL}_d({\mathbb C})$-submodules, then
\[
H_{\text{GL}_d}(W,T_d,z)=\sum_{n\geq 0}\sum_{\lambda}m_{n\lambda}s_{\lambda}(T_d)z^n.
\]
As in \cite{BBDGK} and in the papers cited there it is convenient to introduce two formal power series called the multiplicity series of $W$ which carry
the information for the $\text{GL}_d({\mathbb C})$-structure of $W$:
\[
M(W,T_d,z)=\sum_{n\geq 0}\left(\sum_{\lambda}m_{n\lambda}T_d^{\lambda}\right)z^n
=\sum_{n\geq 0}\left(\sum_{\lambda}m_{n\lambda}t_1^{\lambda_1}\cdots t_d^{\lambda_d}\right)z^n,
\]
\[
M'(W,U_d,z)=\sum_{n\geq 0}\left(\sum_{\lambda}m_{n\lambda}u_1^{\lambda_1-\lambda_2}\cdots u_{d-1}^{\lambda_{d-1}-\lambda_d}u_d^{\lambda_d}\right)z^n,
\]
where the second multiplicity series is obtained from the first one using the change of variables
\[
u_1=t_1,u_2=t_1t_2,\ldots,u_d=t_1\cdots t_d.
\]

The following easy lemma gives the expression of the Hilbert series of $W^G$ for any subgroup $G$ of $\text{GL}_d({\mathbb C})$.

\begin{lemma}\label{Hilbert series for any group}
Let
\[
W=\sum_{n\geq 0}W^{(n)}=\sum_{n\geq 0}\sum_{\lambda}m_{n\lambda}V_d(\lambda),
\]
be a direct sum of polynomial $\text{\rm GL}_d({\mathbb C})$-modules $W^{(n)}$
and let $G$ be an arbitrary subgroup of $\text{\rm GL}_d({\mathbb C})$.
Then the Hilbert series of the $G$-invariants of $W$ is
\[
H(W^G,z)=\sum_{n\geq 0}\left(\sum_{\lambda}m_{n\lambda}\dim V_d(\lambda)^G\right)z^n.
\]
\end{lemma}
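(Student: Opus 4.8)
The plan is to decompose $W^{(n)}$ into its irreducible $\text{GL}_d(\mathbb{C})$-submodules and then apply the $G$-action componentwise. The key observation is that taking $G$-invariants is a linear operation that commutes with direct sums: for a direct sum of vector spaces on which $G$ acts, the space of $G$-invariants is the direct sum of the spaces of $G$-invariants of the summands.

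\begin{proof}
Fix $n\geq 0$. By hypothesis $W^{(n)}=\sum_{\lambda}m_{n\lambda}V_d(\lambda)$ as $\text{GL}_d(\mathbb{C})$-modules, hence in particular as $G$-modules after restricting the action to the subgroup $G$. Since the functor of taking $G$-invariants is additive (if $U=U_1\oplus U_2$ with $G$ acting, then $u=u_1+u_2$ is fixed by $G$ if and only if each $u_i$ is fixed, because the decomposition $U=U_1\oplus U_2$ is $G$-stable), we obtain
\[
(W^{(n)})^G=\left(\sum_{\lambda}m_{n\lambda}V_d(\lambda)\right)^G=\sum_{\lambda}m_{n\lambda}V_d(\lambda)^G.
\]
Taking dimensions,
\[
\dim (W^{(n)})^G=\sum_{\lambda}m_{n\lambda}\dim V_d(\lambda)^G.
\]
Multiplying by $z^n$ and summing over $n\geq 0$ gives
\[
H(W^G,z)=\sum_{n\geq 0}\dim (W^{(n)})^Gz^n=\sum_{n\geq 0}\left(\sum_{\lambda}m_{n\lambda}\dim V_d(\lambda)^G\right)z^n,
\]
as claimed.
\end{proof}

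There is essentially no obstacle here; the only point requiring a word of care is that the isomorphism $W^{(n)}\cong\sum_{\lambda}m_{n\lambda}V_d(\lambda)$ is an isomorphism of $\text{GL}_d(\mathbb{C})$-modules, and therefore restricts to an isomorphism of $G$-modules, so that the multiplicities $m_{n\lambda}$ are unchanged when we pass to the subgroup $G$. One should also note that each $W^{(n)}$ is finite-dimensional (being a polynomial $\text{GL}_d(\mathbb{C})$-module with a well-defined character), so all the dimensions appearing are finite and the manipulations are legitimate; convergence of the series $H(W^G,z)$ as a formal power series in $z$ is automatic since its coefficients are nonnegative integers. The real content of the paper is not this lemma but the subsequent computation of $\dim V_d(\lambda)^G$ for $G=\text{Sp}_d(\mathbb{C})$, $\text{O}_d(\mathbb{C})$, $\text{SO}_d(\mathbb{C})$, which is where branching rules and the combinatorics of Schur functions enter.
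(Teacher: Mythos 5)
Your proof is correct and follows essentially the same route as the paper: both arguments rest on the observation that the direct-sum decomposition into irreducible $\text{GL}_d({\mathbb C})$-submodules is $G$-stable, so a $G$-fixed element has $G$-fixed components and the invariants distribute over the summands, yielding $\dim (W^{(n)})^G=\sum_{\lambda}m_{n\lambda}\dim V_d(\lambda)^G$.
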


\begin{proof}
If $V^{(1)}_d(\lambda^{(1)})\oplus\cdots\oplus V^{(k)}_d(\lambda^{(k)})\subset W$ and
\[
w=w^{(1)}+\cdots+w^{(k)}\in W^G, \quad w^{(i)}\in V^{(i)}_d(\lambda^{(i)}),
\]
then $g(w)=g(w^{(1)})+\cdots+g(w^{(k)})=w^{(1)}+\cdots+w^{(k)}=w$ for all $g\in G$. Since $g(V^{(i)}_d)=V^{(i)}_d$ we obtain that $g(w^{(i)})=w^{(i)}$,
$w^{(i)}\in(V^{(i)}_d)^G$, and
\[
W^G=\sum_{n\geq 0}(W^{(n)})^G=\sum_{n\geq 0}\sum_{\lambda}m_{n\lambda}V_d^G(\lambda)
\]
which implies the formula for $H(W^G,z)$.
\end{proof}

As a consequence we obtain the following method for the computing the Hilbert series of $W^G$ when
$G=\text{Sp}_d({\mathbb C}),\text{O}_d({\mathbb C}),\text{SO}_d({\mathbb C})$.

\begin{theorem}\label{Hilbert series of classical groups}
In the notation of Lemma {\rm \ref{Hilbert series for any group}}
\[
H(W^{\text{\rm Sp}_d({\mathbb C})},z)=\sum_{n\geq 0}\left(\sum_{\mu^2}m_{n\mu^2}\right)z^n,\quad d=2p,
\]
where the summation runs on all partitions $\mu^2=(\mu_1,\mu_1,\mu_2,\mu_2,\ldots,\mu_p,\mu_p)$
with even length of the columns of the corresponding Young diagram $[\mu^2]$;
\[
H(W^{\text{\rm O}_d({\mathbb C})},z)=\sum_{n\geq 0}\left(\sum_{2\nu}m_{n,2\nu}\right)z^n,
\]
where the sum is on all even partitions $2\nu=(2\nu_1,\ldots,2\nu_d)$, i.e., partitions with even parts;
\[
H(W^{\text{\rm SO}_d({\mathbb C})},z)=\sum_{n\geq 0}\left(\sum_{2\nu}m_{n,2\nu}+\sum_{2\nu+1}m_{n,2\nu+1}\right)z^n,
\]
where the sum is on all even partitions $2\nu=(2\nu_1,\ldots,2\nu_d)$ and all odd partitions
$2\nu+1=(2\nu_1+1,\ldots,2\nu_d+1)$.
\end{theorem}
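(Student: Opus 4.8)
The plan is to combine Lemma~\ref{Hilbert series for any group} with the classical description of the spaces of invariants $V_d(\lambda)^G$ inside a single irreducible polynomial $\text{GL}_d({\mathbb C})$-module. By Lemma~\ref{Hilbert series for any group} we already know that
\[
H(W^G,z)=\sum_{n\geq 0}\left(\sum_{\lambda}m_{n\lambda}\dim V_d(\lambda)^G\right)z^n
\]
for every subgroup $G$ of $\text{GL}_d({\mathbb C})$. Hence, once $\dim V_d(\lambda)^G$ is determined for $G=\text{Sp}_d({\mathbb C}),\text{O}_d({\mathbb C}),\text{SO}_d({\mathbb C})$, the three asserted formulas follow by collecting the partitions $\lambda$ that actually contribute. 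Thus the whole problem reduces to computing three ``branching multiplicities of the trivial module''.

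For $G=\text{Sp}_d({\mathbb C})$ with $d=2p$ I would invoke the classical restriction rule $\text{GL}_d({\mathbb C})\downarrow\text{Sp}_d({\mathbb C})$ of Littlewood (see also the corresponding Littlewood identities for Schur functions in \cite{Mc}); specialized to the trivial $\text{Sp}_d$-module it says that $\dim V_d(\lambda)^{\text{Sp}_d}=1$ exactly when every column of the Young diagram $[\lambda]$ has even length, and $0$ otherwise. A self-contained route to the same statement, which also avoids the usual stable-range and modification-rule caveats since it only concerns the trivial isotypic component, is to apply $(\text{GL}_d,\text{GL}_N)$-duality on ${\mathbb C}[\text{Mat}_{d\times N}]=\bigoplus_{\lambda}V_d(\lambda)\otimes V_N(\lambda)$ together with the first fundamental theorem of invariant theory for $\text{Sp}_d({\mathbb C})$: for $N\leq d$ one has ${\mathbb C}[\text{Mat}_{d\times N}]^{\text{Sp}_d}\cong\text{Sym}(\Lambda^2{\mathbb C}^N)=\bigoplus_{\mu}V_N(\mu^2)$, and comparing the multiplicity of $V_N(\lambda)$ on the two sides identifies $\dim V_d(\lambda)^{\text{Sp}_d}$ with the indicator that $\lambda$ is of the form $\mu^2=(\mu_1,\mu_1,\ldots,\mu_p,\mu_p)$ (these are precisely the partitions all of whose columns have even length). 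This gives the first formula. The case $G=\text{O}_d({\mathbb C})$ is entirely parallel, now using the first fundamental theorem for $\text{O}_d({\mathbb C})$ and the identity $\text{Sym}(\text{Sym}^2{\mathbb C}^N)=\bigoplus_{\nu}V_N(2\nu)$: one obtains $\dim V_d(\lambda)^{\text{O}_d}=1$ precisely when $\lambda=2\nu=(2\nu_1,\ldots,2\nu_d)$ is an even partition and $0$ otherwise, which is the second formula.

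For $G=\text{SO}_d({\mathbb C})$ I would use that $\text{SO}_d({\mathbb C})$ is normal of index $2$ in $\text{O}_d({\mathbb C})$, so that $V_d(\lambda)^{\text{SO}_d}$ is stable under $\text{O}_d({\mathbb C})$, carries an action of the quotient ${\mathbb Z}/2{\mathbb Z}$, and splits as the sum of its $(+1)$- and $(-1)$-eigenspaces. The first eigenspace is $V_d(\lambda)^{\text{O}_d}$ and the second is the $\det$-isotypic component $\text{Hom}_{\text{O}_d}(\det,V_d(\lambda))$, where $\det$ is the determinant (sign) character of $\text{O}_d({\mathbb C})$. Hence
\[
\dim V_d(\lambda)^{\text{SO}_d}=\dim V_d(\lambda)^{\text{O}_d}+\dim\left(V_d(\lambda)\otimes\det\right)^{\text{O}_d}.
\]
Since $\det$ is the restriction to $\text{O}_d({\mathbb C})$ of $\Lambda^d{\mathbb C}^d=V_d(1^d)$, we have $V_d(\lambda)\otimes\det=V_d(\lambda+(1^d))$ as $\text{GL}_d({\mathbb C})$-modules, so by the orthogonal case already treated the second summand equals $1$ exactly when $\lambda+(1^d)=(\lambda_1+1,\ldots,\lambda_d+1)$ is an even partition, i.e. when $\lambda$ has exactly $d$ parts all of which are odd, i.e. $\lambda=2\nu+1=(2\nu_1+1,\ldots,2\nu_d+1)$. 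The two contributing families (even partitions $2\nu$, and partitions $2\nu+1$) are disjoint, so $\dim V_d(\lambda)^{\text{SO}_d}\in\{0,1\}$; substituting into the formula from Lemma~\ref{Hilbert series for any group} gives the third formula.

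I expect the main obstacle to be this last step for $\text{SO}_d({\mathbb C})$: one has to keep track of the determinant character of $\text{O}_d({\mathbb C})$, recognize it as the restriction of $\Lambda^d{\mathbb C}^d$, control how tensoring with it shifts $\lambda$ by an entire column of height $d$, and then check that this singles out precisely the family $2\nu+1$ and that it does not overlap the even partitions. A secondary point deserving a line of care is that the Littlewood restriction rules are valid only in a stable range in general; the safeguard I would use throughout is to compute the trivial (and sign) isotypic components by means of the first fundamental theorems of classical invariant theory, for which the resulting combinatorial statements hold for all $d$ and all admissible $\lambda$.
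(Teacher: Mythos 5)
Your proposal is correct, and its skeleton coincides with the paper's: both reduce the theorem, via Lemma~\ref{Hilbert series for any group}, to knowing that $\dim V_d(\lambda)^G\in\{0,1\}$ with the nonzero cases being exactly the partitions with even column lengths (for $\text{Sp}_d({\mathbb C})$, $d=2p$), the even partitions (for $\text{O}_d({\mathbb C})$), and the even together with the odd partitions $2\nu+1$ (for $\text{SO}_d({\mathbb C})$). The difference is in how this key input is justified: the paper simply cites the authors' companion paper \cite{DH} for these dimensions, whereas you derive them independently, using $(\text{GL}_d,\text{GL}_N)$-duality on ${\mathbb C}[\text{Mat}_{d\times N}]$ combined with the first (and implicitly second) fundamental theorems for $\text{Sp}_d({\mathbb C})$ and $\text{O}_d({\mathbb C})$, together with the decompositions $\text{Sym}(\Lambda^2{\mathbb C}^N)=\bigoplus_\mu V_N(\mu^2)$ and $\text{Sym}(\text{Sym}^2{\mathbb C}^N)=\bigoplus_\nu V_N(2\nu)$, and, for $\text{SO}_d({\mathbb C})$, the index-two argument $\dim V_d(\lambda)^{\text{SO}_d}=\dim V_d(\lambda)^{\text{O}_d}+\dim\bigl(V_d(\lambda)\otimes\det\bigr)^{\text{O}_d}$ with $\det$ realized as the restriction of $V_d(1^d)$, so that the second summand detects $\lambda=2\nu+1$. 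All of these steps are sound (your range $N\leq d$ is safely inside the range where the basic $\text{Sp}$- respectively $\text{O}$-invariants are algebraically independent, and taking $N=d$ covers every polynomial $\lambda$), and the two contributing families for $\text{SO}_d({\mathbb C})$ are indeed disjoint. What your route buys is a self-contained proof valid for all $d$ with no stable-range caveats, in place of the paper's external citation; what the paper's route buys is brevity, since \cite{DH} already records precisely these multiplicities.
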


\begin{proof}
By our paper \cite{DH} for $d=2p$ the irreducible $\text{GL}_d({\mathbb C})$-module $V_d(\lambda)$
contains one-dimensional $\text{Sp}_d({\mathbb C})$-invariant subspace if and only if $\lambda$ is a partition
with even length of the columns of the Young diagram $[\lambda]$ and does not contain
$\text{Sp}_d({\mathbb C})$-invariants otherwise. Together with Lemma \ref{Hilbert series for any group}
this gives the formula for the Hilbert series $H(W^{\text{Sp}_d({\mathbb C})},z)$.
The proof of the other two cases is similar since by \cite{DH}
$\dim V_d^{\text{O}_d({\mathbb C})}(\lambda)=1$ when $\lambda$ is an even partition and
$V_d^{\text{O}_d({\mathbb C})}(\lambda)=0$ otherwise. For $\text{SO}_d({\mathbb C})$ we obtain from \cite{DH} that
$\dim V_d^{\text{SO}_d({\mathbb C})}(\lambda)=1$ when $\lambda$ is either an even or an odd partition and
$V_d^{\text{SO}_d({\mathbb C})}(\lambda)=0$ otherwise.
\end{proof}

The following theorem expresses the Hilbert series of the $G$-invariants
in terms of the multiplicity series
for $G=\text{Sp}_d({\mathbb C}),\text{O}_d({\mathbb C}),\text{SO}_d({\mathbb C})$.

\begin{theorem}\label{Hilbert series via multiplicity series}
Let $W=W^{(0)}\oplus W^{(1)}\oplus W^{(2)}\oplus\cdots$ be a direct sum of polynomial $\text{\rm GL}_d({\mathbb C})$-modules
with multiplicity series $M(W,T_d,z)$ and $M'(W,U_d,z)$. Then the Hilbert series of $W^{\text{\rm Sp}_d({\mathbb C})}$ for $d$ even
and of $W^{\text{\rm O}_d({\mathbb C})}$ and $W^{\text{\rm SO}_d({\mathbb C})}$ are equal to
\[
H(W^{\text{\rm Sp}_d({\mathbb C})},z)=M'(W,0,1,0,1,\ldots,0,1,z);
\]
\[
H(W^{\text{\rm O}_d({\mathbb C})},z)=M_d(W,z),
\]
where $M_d(W,z)$ is defined iteratively by
\[
M_1(W,t_2,\ldots,t_d,z)=\frac{1}{2}(M(W,-1,t_2,\ldots,t_d,z)+M(W,1,t_2,\ldots,t_d,z)),
\]
\[
M_2(W,t_3,\ldots,t_d,z)=\frac{1}{2}(M_1(W,-1,t_3,\ldots,t_d,z)+M_1(W,1,t_3,\ldots,t_d,z)),
\]
\[
\cdots\cdots
\]
\[
M_d(W,z)=\frac{1}{2}(M_{d-1}(W,-1,z)+M_{d-1}(1,z));
\]
\[
H(W^{\text{\rm SO}_d({\mathbb C})},z)=M'_d(W,z),
\]
where $M'_d(W,z)$ is defined iteratively by
\[
M'_1(W,u_2,\ldots,u_d,z)=\frac{1}{2}(M'(W,-1,u_2,\ldots,u_d,z)+M'(W,1,u_2,\ldots,u_d,z)),
\]
\[
M'_2(W,u_3,\ldots,u_d,z)=\frac{1}{2}(M'_1(W,-1,u_3,\ldots,u_d,z)+M'_1(W,1,u_3,\ldots,u_d,z)),
\]
\[
\cdots\cdots
\]
\[
M'_{d-1}(W,u_d,z)=\frac{1}{2}(M'_{d-2}(W,-1,u_d,z)+M'_{d-2}(1,u_d,z)),
\]
\[
M'_d(W,z)=M'_{d-1}(W,1,z).
\]
\end{theorem}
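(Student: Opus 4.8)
The plan is to deduce all three formulas from Theorem~\ref{Hilbert series of classical groups}, which already expresses each Hilbert series as a sum of the multiplicities $m_{n\lambda}$ over an explicit class of partitions; the remaining task is one of encoding, namely to recognize each such class of partitions $\lambda$ as the set cut out by vanishing or parity conditions on the exponent vector that $\lambda$ contributes to $M(W,T_d,z)$ (the exponents $\lambda_1,\dots,\lambda_d$ of $t_1,\dots,t_d$) or to $M'(W,U_d,z)$ (the exponents $\lambda_1-\lambda_2,\dots,\lambda_{d-1}-\lambda_d,\lambda_d$ of $u_1,\dots,u_d$), and then to realize the corresponding coefficient extraction by a substitution. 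First I would isolate the two elementary operations on a one-variable formal power series $f(s)=\sum_{k\ge 0}a_k s^k$ with coefficients $a_k=a_k(z)$ in the remaining variables: the evaluation $f(0)=a_0$ keeps only the $s$-free part, while $\frac12\bigl(f(1)+f(-1)\bigr)=\sum_{k\text{ even}}a_k$ keeps only the part of even degree in $s$. Applying these one variable at a time to the coefficient of $z^n$ in $M$ or $M'$ is legitimate, that coefficient being a polynomial in the displayed variables whenever the homogeneous components $W^{(n)}$ are finite-dimensional, which is the case in all of our applications.

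For $\mathrm{Sp}_d(\mathbb C)$ with $d=2p$, I would use that a partition has all columns of even length precisely when $\lambda_1=\lambda_2$, $\lambda_3=\lambda_4,\dots,\lambda_{d-1}=\lambda_d$, that is, precisely when the exponents of $u_1,u_3,\dots,u_{d-1}$ in $M'$ all vanish. Hence setting $u_1=u_3=\cdots=u_{d-1}=0$ retains exactly the monomials indexed by the partitions $\mu^2$ of Theorem~\ref{Hilbert series of classical groups}, while the subsequent substitution $u_2=u_4=\cdots=u_d=1$ turns each surviving monomial into $1$; by that theorem the outcome $M'(W,0,1,0,1,\dots,0,1,z)$ equals $H(W^{\mathrm{Sp}_d(\mathbb C)},z)$. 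For $\mathrm{O}_d(\mathbb C)$ I would work instead with $M$, whose $\lambda$-monomial is $t_1^{\lambda_1}\cdots t_d^{\lambda_d}$: a partition is even exactly when every $\lambda_i$ is even, so the successive $\pm1$-averaging operators in $t_1,t_2,\dots,t_d$, which are precisely the definitions of $M_1,M_2,\dots,M_d$, leave exactly $\sum_{n\ge 0}\bigl(\sum_{2\nu}m_{n,2\nu}\bigr)z^n=H(W^{\mathrm{O}_d(\mathbb C)},z)$.

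For $\mathrm{SO}_d(\mathbb C)$ the key observation is that a partition is even \emph{or} odd exactly when all of its parts share a common parity, equivalently when $\lambda_i-\lambda_{i+1}$ is even for $i=1,\dots,d-1$ with no restriction on $\lambda_d$; since $\lambda_i-\lambda_{i+1}$ is the exponent of $u_i$ in $M'$ for $i<d$ while $\lambda_d$ is the exponent of $u_d$, I would apply the $\pm1$-averaging only in the variables $u_1,\dots,u_{d-1}$, which is the definition of $M'_1,\dots,M'_{d-1}$, and then set $u_d=1$ to sum over all values of $\lambda_d$, obtaining $M'_d(W,z)=\sum_{n\ge 0}\bigl(\sum_{2\nu}m_{n,2\nu}+\sum_{2\nu+1}m_{n,2\nu+1}\bigr)z^n=H(W^{\mathrm{SO}_d(\mathbb C)},z)$. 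I do not expect a deep difficulty here; the points that require care are the three combinatorial translations just used --- ``all columns even'' versus vanishing of the odd-indexed $u$-exponents, ``even partition'' versus evenness of all $t$-exponents, and ``even-or-odd partition'' versus evenness of the $u$-exponents of index below $d$ --- together with keeping track that $u_d$ records $\lambda_d$ itself rather than a consecutive difference, so that it must be handled separately in the $\mathrm{Sp}_d$ and $\mathrm{SO}_d$ computations.
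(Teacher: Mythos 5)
Your proposal is correct and follows essentially the same route as the paper: the paper's proof (by reference to the analogous theorem in \cite{DH}) likewise combines the partition characterizations of Theorem \ref{Hilbert series of classical groups} with the observation that the stated substitutions in $M$ and $M'$ extract exactly the multiplicities of those partitions. You have merely written out explicitly the coefficient-extraction steps (evaluation at $0$ and $\pm1$-averaging) and the three combinatorial translations that the paper leaves implicit.
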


\begin{proof}
The arguments of the proof repeat verbatim the arguments of a similar theorem from \cite{DH}
in the case when $W$ is the symmetric algebra $S(W^{(1)})$ of the $\text{GL}_d({\mathbb C})$-module $W^{(1)}$
because in both cases we start with the multiplicity series of $S(W^{(1)})$ and $W$, respectively,
and take a summation on the partitions $\lambda$ which correspond
to those $\text{GL}_d({\mathbb C})$-modules $V_d(\lambda)$ which contribute to the Hilbert series of the $G$-invariants
with nontrivial $G$-invariants for each of the classical groups
$G=\text{Sp}_d({\mathbb C}),\text{O}_d({\mathbb C}),\text{SO}_d({\mathbb C})$.
\end{proof}

\begin{remark}\label{invariants of SL and UT}
The formulas for the Hilbert series of the invariants of
$\text{Sp}_d({\mathbb C}),\text{O}_d({\mathbb C})$, and $\text{SO}_d({\mathbb C})$
in Theorem \ref{Hilbert series via multiplicity series} are in the spirit of similar formulas for the invariants
of the special linear group $\text{SL}_d({\mathbb C})$ and the unitriangular group $\text{UT}_d({\mathbb C})$
given in \cite{BBDGK}:
\[
H(W^{\text{\rm SL}_d({\mathbb C})},z)=M'(W,0,0,\ldots,0,1,z),
\]
\[
H(W^{\text{\rm UT}_d({\mathbb C})},z)=M(W,1,\ldots,1,z)=M'(W,1,\ldots,1,z).
\]
\end{remark}

\section{Canonical action of $\text{GL}_d({\mathbb C})$}
In this section we compute the Hilbert series of the algebras of invariants when the group $\text{GL}_d({\mathbb C})$
acts canonically on the vector space $W_d={\mathbb C}X_d$ generating the algebra. The algebras in consideration are the relatively free algebras
of the varieties of associative algebras $\mathfrak G$ and $\mathfrak T$. The necessary background including the application of representation theory of
the general linear group to relatively free algebras may be found in the book by one of the authors \cite{D}.
In what follows we assume that the relatively free algebras are freely generated by the set $X_d$.
We also assume that with respect to the basis $X_d=X_{2p}$ the group $\text{Sp}_{2p}({\mathbb C})$ consists of the $(2p)\times (2p)$ matrices
$g$ with the property $g^t\Omega g=\Omega$, where $g^t$ is the transpose of $g$,
$\displaystyle \Omega=\left(\begin{matrix}0&I_p\\
=I_p&0\\
\end{matrix}\right)$, and $I_p$ is the identity $p\times p$ matrix. Similarly, the group $\text{O}_d({\mathbb C})$ consists of the $d\times d$ matrices $g$
such that $g^tg=I_d$ and $\text{SO}_d({\mathbb C})=\{g\in \text{O}_d({\mathbb C})\mid \det(g)=1\}$.

\subsection{The relatively free algebra $F_d({\mathfrak G})$}
The description of the polynomial identities and the cocharacter sequence of the variety $\mathfrak G$
generated by the Grassmann algebra $E$ was given by Krakowski and Regev \cite{KR}
and by Olsson and Regev \cite{OR}.
The variety is defined by the polynomial identity
\[
[x_1,x_2,x_3]=[[x_1,x_2],x_3]=0.
\]
It is well known that $F_d({\mathfrak G})$ has a basis consisting of all
\[
x_1^{n_1}\cdots x_d^{n_d}[x_{i_1},x_{i_2}]\cdots [x_{i_{2q-1}},x_{i_{2q}}],
\]
$n_j\geq 0$, $1\leq i_1<i_2<\cdots<i_{2q-1}<i_{2q}\leq d$, $0\leq 2q\leq d$, and in $F_d({\mathfrak G})$
\[
[x_{\sigma(1)},x_{\sigma(2)}]\cdots [x_{\sigma(2q-1)},x_{\sigma(2q)}]=\text{sign}(\sigma)[x_1,x_2]\cdots[x_{2q-1},x_{2q}],\quad \sigma\in S_{2q}.
\]
The cocharacter sequence of $\mathfrak G$ is
\[
\chi_0({\mathfrak G})=\chi(0),\chi_n({\mathfrak G})=\sum_{i=1}^n\chi(i,1^{n-i}),n=1,2,\ldots,
\]
where $\chi(\lambda)$, $\lambda\vdash n$, is the irreducible $S_n$-character indexed by the partition $\lambda$.
In other words, the summation is on all partitions $\lambda$ with Young diagram consisting of one long row and one long column.
This implies that
\[
F_d({\mathfrak G})={\mathbb C}+\sum_{n\geq 1}\sum_{i=1}^n V_d(i,1^{n-i}),
\]
where $V_d(\lambda)=0$ if $\lambda$ is a partition in more than $d$ parts.

\begin{proposition}\label{F(var E) action of Sp}
Let $d=2p$. Then the Hilbert series of $F_d({\mathfrak G})^{\text{\rm Sp}_d({\mathbb C})}$ is
\[
H(F_d({\mathfrak G})^{\text{\rm Sp}_d({\mathbb C})},z)=1+z^2+z^4+\cdots+z^{2p}
\]
and the algebra $F_d({\mathfrak G})^{\text{\rm Sp}_d({\mathbb C})}$ is generated by
\[
f=[x_1,x_{p+1}]+[x_2,x_{p+2}]+\cdots+[x_p,x_{2p}].
\]
\end{proposition}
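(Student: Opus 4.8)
The plan is to compute the Hilbert series via Theorem~\ref{Hilbert series of classical groups} and then to identify generators by a direct analysis of the commutator structure of $F_d(\mathfrak G)$. First I would extract the multiplicities: from the decomposition $F_d(\mathfrak G)={\mathbb C}+\sum_{n\geq 1}\sum_{i=1}^n V_d(i,1^{n-i})$ we see that every $V_d(\lambda)$ occurring has $\lambda$ a hook $(i,1^{n-i})$, each with multiplicity one. According to Theorem~\ref{Hilbert series of classical groups}, $H(F_d(\mathfrak G)^{\text{Sp}_d({\mathbb C})},z)$ counts those $\lambda$ whose Young diagram has all columns of even length. A hook $(i,1^{n-i})$ has first column of length $n-i+1$ and all other columns of length $1$; hence the only hooks with all columns of even length are those with a single column of even length, i.e.\ $\lambda=(1^{2k})$ with $2k\leq d=2p$, contributing $z^{2k}$ for $k=0,1,\ldots,p$ (the case $k=0$ being the constant term ${\mathbb C}$). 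This gives $H(F_d(\mathfrak G)^{\text{Sp}_d({\mathbb C})},z)=1+z^2+\cdots+z^{2p}$. As a cross-check I would run this through Theorem~\ref{Hilbert series via multiplicity series}: the multiplicity series $M'$ evaluated at $(0,1,0,1,\ldots,0,1)$ should pick out exactly the pure-column partitions $\lambda=(1^{2k})$, since $u_i^{\lambda_i-\lambda_{i+1}}$ kills any $\lambda$ with $\lambda_1>\lambda_2$ (odd-indexed $u$ set to $0$) unless all those exponents vanish.

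Next I would produce the invariant $f=\sum_{i=1}^p[x_i,x_{p+i}]$ and verify it is $\text{Sp}_d({\mathbb C})$-invariant. The element $f$ lives in the degree-$2$, multidegree-$(1,\ldots,1)$ component, which as a $\text{GL}_d({\mathbb C})$-module is $V_d(1,1)=\Lambda^2(W_d)$ realized inside $F_d(\mathfrak G)$ via $x_i\wedge x_j\mapsto [x_i,x_j]$. Under the skew form $\Omega$ the element $f$ corresponds to the standard symplectic form, which is exactly the (up to scalar unique) $\text{Sp}_d({\mathbb C})$-fixed vector in $\Lambda^2(W_d)$; so invariance is immediate, or can be checked directly from $g^t\Omega g=\Omega$. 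More generally the $\text{Sp}_d$-invariant in $V_d(1^{2k})=\Lambda^{2k}(W_d)$ is a scalar multiple of the $k$-th exterior power of $f$, which in $F_d(\mathfrak G)$ is represented by $\sum [x_{j_1},x_{j_1'}]\cdots[x_{j_k},x_{j_k'}]$ summed appropriately; using the sign relation $[x_{\sigma(1)},x_{\sigma(2)}]\cdots[x_{\sigma(2k-1)},x_{\sigma(2k)}]=\text{sign}(\sigma)[x_1,x_2]\cdots[x_{2k-1},x_{2k}]$ one sees this is precisely $f^k$ (an appropriate scalar multiple) computed in the algebra $F_d(\mathfrak G)$.

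The remaining point is that these powers exhaust the invariants, i.e.\ that $F_d(\mathfrak G)^{\text{Sp}_d({\mathbb C})}={\mathbb C}[f]$ with $f^{p+1}=0$. Here I would argue that for each $k=0,1,\ldots,p$ the invariant subspace of the degree-$2k$ part is one-dimensional (from the Hilbert series just computed) and is spanned by $f^k$, which is nonzero in $F_d(\mathfrak G)$ for $k\leq p$ by the explicit monomial basis (the monomial $[x_1,x_2][x_3,x_4]\cdots[x_{2k-1},x_{2k}]$ appears with nonzero coefficient). Since $f^{p+1}$ would have to involve $2p+2>d$ distinct commutator letters it vanishes, matching the fact that there is no hook $(1^{2p+2})$ with at most $d$ parts. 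Thus $\{1,f,f^2,\ldots,f^p\}$ is a basis of $F_d(\mathfrak G)^{\text{Sp}_d({\mathbb C})}$ and the algebra is generated by $f$. The main obstacle I anticipate is bookkeeping: making the identification of the degree-$2k$ $\text{Sp}_d$-invariant with the honest product $f^k$ in $F_d(\mathfrak G)$ precise, i.e.\ tracking the combinatorial scalar produced by the antisymmetrization relation among the $[x_i,x_j]$, and confirming nonvanishing from the basis; the representation-theoretic dimension count itself is handed to us by Theorem~\ref{Hilbert series of classical groups} and \cite{DH}.
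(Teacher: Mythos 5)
Your proposal is correct and follows essentially the same route as the paper: the Hilbert series comes from Theorem~\ref{Hilbert series of classical groups} by observing that the only hooks $(i,1^{n-i})$ with all columns of even length are $(1^{2k})$, $0\le 2k\le 2p$, and then one checks that $f$ is $\text{Sp}_d({\mathbb C})$-invariant with $f,f^2,\ldots,f^p\neq 0$, so $\{1,f,\ldots,f^p\}$ is a basis of the invariants. The only (harmless) slip is the monomial cited for nonvanishing: in $f^k$ the surviving basis element involves the commutators pairing $x_i$ with $x_{p+i}$ (reordered by the sign rule), not $[x_1,x_2][x_3,x_4]\cdots[x_{2k-1},x_{2k}]$ literally.
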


\begin{proof}
The first part of the proposition follows immediately from Theorem \ref{Hilbert series of classical groups}
because the only partitions $(i,1^{n-i})$ in not more than $2p$ parts and with even length of the corresponding Young diagram
are $(0),(1^2),(1^4),\ldots,(1^{2p})$. Easy computations show that $f\in F_d({\mathfrak G})^{\text{Sp}_d({\mathbb C})}$ and $f,f^2,\ldots,f^p\not=0$
which immediately implies that $\{1,f,f^2,\ldots,f^p\}$ is a basis of the vector space $F_d({\mathfrak G})^{\text{Sp}_d({\mathbb C})}$.
\end{proof}

\begin{proposition}\label{F(var E) action of O}
The Hilbert series of the algebra $F_d({\mathfrak G})^{\text{\rm O}_d({\mathbb C})}$ is
\[
H(F_d({\mathfrak G})^{\text{\rm O}_d({\mathbb C})},z)=\frac{1}{1-z^2}
\]
and the algebra $F_d({\mathfrak G})^{\text{\rm O}_d({\mathbb C})}$ is the symmetric algebra generated by the element of $F_d({\mathfrak G})$
\[
f=x_1^2+\cdots+x_d^2.
\]
\end{proposition}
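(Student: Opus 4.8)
The plan is to combine the known $\text{GL}_d({\mathbb C})$-decomposition $F_d({\mathfrak G})={\mathbb C}+\sum_{n\geq 1}\sum_{i=1}^n V_d(i,1^{n-i})$ with Theorem \ref{Hilbert series of classical groups}, and then to identify an explicit generator. Since every irreducible component of $F_d({\mathfrak G})$ occurs with multiplicity $1$, Theorem \ref{Hilbert series of classical groups} says that $H(F_d({\mathfrak G})^{\text{O}_d({\mathbb C})},z)$ is the sum of $z^n$ over those hooks $(i,1^{n-i})$ that are even partitions. A hook $(i,1^{n-i})$ has all parts even precisely when it has no part equal to $1$, i.e. when $n=i$ and $n$ is even; so the only partitions that contribute are $(0),(2),(4),\ldots$, each once, and hence $H(F_d({\mathfrak G})^{\text{O}_d({\mathbb C})},z)=\sum_{k\geq 0}z^{2k}=1/(1-z^2)$. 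This already pins down the dimension of the homogeneous components of the invariant algebra: $\dim (F_d({\mathfrak G})^{\text{O}_d({\mathbb C})})^{(2k)}=1$ for all $k\geq 0$ and $0$ in odd degrees.

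Next I would check that $f=x_1^2+\cdots+x_d^2$ is $\text{O}_d({\mathbb C})$-invariant, by a direct computation in ${\mathbb C}\langle X_d\rangle$ that then descends to the quotient $F_d({\mathfrak G})$: if $g=(g_{ij})\in\text{O}_d({\mathbb C})$ acts by $x_i\mapsto\sum_j g_{ji}x_j$, then $f$ is sent to $\sum_{j,k}\bigl(\sum_i g_{ji}g_{ki}\bigr)x_jx_k=\sum_{j,k}(gg^t)_{jk}x_jx_k=\sum_j x_j^2=f$, using $gg^t=I_d$. Thus $f\in F_d({\mathfrak G})^{\text{O}_d({\mathbb C})}$, and so is every power $f^k$, which is a homogeneous element of degree $2k$.

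Finally I would verify that $f^k\neq 0$ in $F_d({\mathfrak G})$ for every $k$. The cleanest route is to apply the canonical homomorphism from $F_d({\mathfrak G})$ onto its abelianization; by the monomial basis of $F_d({\mathfrak G})$ recalled above this abelianization is the commutative polynomial algebra ${\mathbb C}[x_1,\ldots,x_d]$, and $f$ is sent to $x_1^2+\cdots+x_d^2$, whose powers are all nonzero. Hence $f^k\neq 0$ for all $k$, so $1,f,f^2,\ldots$ are nonzero homogeneous elements in degrees $0,2,4,\ldots$; by the dimension count of the first paragraph they form a homogeneous basis of $F_d({\mathfrak G})^{\text{O}_d({\mathbb C})}$. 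In particular they are algebraically independent, so $F_d({\mathfrak G})^{\text{O}_d({\mathbb C})}={\mathbb C}[f]$ is the symmetric algebra on the single element $f$. The only step requiring any care is the combinatorial bookkeeping that identifies exactly which hooks are even partitions; once that is settled, both the Hilbert series and the identification of the generator are immediate, with the nonvanishing of $f^k$ guaranteed by passing to the commutative quotient.
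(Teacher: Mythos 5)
Your proposal is correct and follows essentially the same route as the paper: apply Theorem \ref{Hilbert series of classical groups} to the hook decomposition of $F_d({\mathfrak G})$, observe that the only even hooks are $(0),(2),(4),\ldots$ each with multiplicity one, and then match the resulting series $1/(1-z^2)$ against the powers of $f=x_1^2+\cdots+x_d^2$. The only difference is that you spell out two facts the paper merely asserts --- the $\text{O}_d(\mathbb C)$-invariance of $f$ via $gg^t=I_d$ and the nonvanishing of $f^k$ by passing to the abelianization ${\mathbb C}[x_1,\ldots,x_d]$ --- both of which are valid.
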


\begin{proof}
We repeat the arguments in the proof of Proposition \ref{F(var E) action of Sp}.
Since the only even partitions $(i,1^{n-i})$ are (2q), $q=0,1,2,\ldots$,
applying Theorem \ref{Hilbert series of classical groups} we derive that
\[
H(F_d({\mathfrak G})^{\text{O}_d({\mathbb C})},z)=1+z^2+z^4+\cdots=\frac{1}{1-z^2}.
\]
Since the element $f$ is an $\text{O}_d({\mathbb C})$-invariant and its powers are nonzero in
$F_d({\mathfrak G})$ we conclude that $\{1,f,f^2,\ldots\}$ is a basis of $F_d({\mathfrak G})^{\text{O}_d({\mathbb C})}$
which completes the proof.
\end{proof}

\begin{proposition}\label{F(var E) action of SO}
The Hilbert series of the algebra $F_d({\mathfrak G})^{\text{\rm SO}_d({\mathbb C})}$ is
\[
H(F_d({\mathfrak G})^{\text{\rm SO}_d({\mathbb C})},z)=\frac{1+z^d}{1-z^2}
\]
and the algebra $F_d({\mathfrak G})^{\text{\rm SO}_d({\mathbb C})}$ is generated by the element
\[
f=x_1^2+\cdots+x_d^2
\]
and the standard polynomial of degree $d$
\[
St_d(x_1,\ldots,x_d)=\sum_{\sigma\in S_d}\text{\rm sign}(\sigma)x_{\sigma(1)}\cdots x_{\sigma(d)}.
\]
\end{proposition}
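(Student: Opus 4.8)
The plan is to mimic the strategy of Propositions \ref{F(var E) action of Sp} and \ref{F(var E) action of O}: first pin down the Hilbert series via Theorem \ref{Hilbert series of classical groups}, then exhibit enough explicit invariants to account for every dimension.

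First I would compute $H(F_d({\mathfrak G})^{\text{SO}_d({\mathbb C})},z)$. By Theorem \ref{Hilbert series of classical groups} the coefficient of $z^n$ counts the partitions $\lambda=(i,1^{n-i})$ in at most $d$ parts that are \emph{either} even \emph{or} odd. Among the hook partitions $(i,1^{n-i})$ the even ones are exactly $(2q)$, $q\geq 0$ (contributing $1/(1-z^2)$, as in Proposition \ref{F(var E) action of O}); the odd ones must have all parts odd and all equal to $1$ except possibly the first, and with exactly $d$ parts this forces $\lambda=(1^d)$ (so $d$ odd is needed for $(1^d)$ to be a genuine odd partition, but in any case $(1^d)$ is the unique extra hook, sitting in degree $d$). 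Hence the extra contribution is $z^d$, giving $H=\dfrac{1+z^d}{1-z^2}$ as claimed. (One should note the parity subtlety: if $d$ is even then $(1^d)$ is not an ``odd partition'' in the sense of Theorem \ref{Hilbert series of classical groups}, so one instead uses $V_d(1^d)$ which is still $1$-dimensional as an $\text{SO}_d$-module because $[x_1,\dots,x_d]$ spans a copy of the determinant-power representation, which is trivial on $\text{SO}_d$; I would phrase the argument directly in terms of $\dim V_d(\lambda)^{\text{SO}_d({\mathbb C})}$ from \cite{DH} to sidestep this.)

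Next I would identify the invariants. The element $f=x_1^2+\cdots+x_d^2$ lies in $F_d({\mathfrak G})^{\text{O}_d({\mathbb C})}\subseteq F_d({\mathfrak G})^{\text{SO}_d({\mathbb C})}$ by Proposition \ref{F(var E) action of O}, and its powers are nonzero and linearly independent. The standard polynomial $St_d(x_1,\dots,x_d)$ is $\text{SO}_d({\mathbb C})$-invariant: under $g\in\text{GL}_d({\mathbb C})$ it transforms by $\det(g)$ (this is the classical fact that $St_d$ spans the one-dimensional space $V_d(1^d)$ inside the free algebra, which survives passage to $F_d({\mathfrak G})$ since $(1^d)$ is a hook and hence present in $F_d({\mathfrak G})$), so it is fixed by $\text{SO}_d({\mathbb C})$ but not by $\text{O}_d({\mathbb C})$. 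One checks $St_d\neq 0$ in $F_d({\mathfrak G})$ using the explicit basis recorded above: in fact the identities $[x_{\sigma(1)},x_{\sigma(2)}]\cdots[x_{\sigma(2q-1)},x_{\sigma(2q)}]=\text{sign}(\sigma)[x_1,x_2]\cdots$ let one rewrite $St_d$ as a nonzero multiple of a product of commutators.

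Finally I would show these generate. Since $f$ and $St_d$ commute (degree reasons plus $St_d^2=0$ would even make $St_d$ a square-zero element) — more carefully, $St_d$ multiplied by anything of positive degree lands in degree $>d$, where by the Hilbert series computation the only invariants are the powers $f^k$ with $2k>d$; comparing leading terms shows $f\cdot St_d$ is proportional to ... — I would instead argue dimension-by-dimension: in each degree $n$ the space $(F_d({\mathfrak G})^{(n)})^{\text{SO}_d({\mathbb C})}$ has dimension $1$ if $n<d$ is even or $n=d$ odd-case, dimension $1$ (spanned by $f^{n/2}$) if $n>d$ is even with $n\not\equiv$ the $St_d$-degree, and dimension $2$ exactly when $n=d$ (if $d$ even) or $n=d+2k$... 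The cleanest route: the products $f^k$ ($k\geq 0$) and $f^k St_d$ ($k\geq 0$) are nonzero, and their degrees $2k$ and $d+2k$ realize precisely the exponents appearing in $(1+z^d)/(1-z^2)=\sum z^{2k}+\sum z^{d+2k}$ with the right multiplicities; since they are manifestly linearly independent (degrees $2k$ versus $d+2k$ can coincide only if $d$ is even, in which case one must separately check $f^{d/2+k}$ and $f^k St_d$ are independent in degree $d+2k$, using that $St_d$ is not a polynomial in $f$ — e.g. $St_d$ involves noncommutative monomials with all variables distinct while $f^j$ does not), they span $F_d({\mathfrak G})^{\text{SO}_d({\mathbb C})}$. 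The main obstacle I anticipate is exactly this last linear-independence bookkeeping when $d$ is even and $St_d$ has the same degree as a power of $f$; resolving it requires a concrete structural description of $St_d$ in the commutator basis of $F_d({\mathfrak G})$, which is the one genuinely computational point of the argument.
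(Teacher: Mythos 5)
Your overall strategy is the same as the paper's: obtain the Hilbert series from Theorem \ref{Hilbert series of classical groups} and then match it with the invariants $f^k$ and $St_df^k$. However, there is a genuine error in the key combinatorial step. In Theorem \ref{Hilbert series of classical groups} an odd partition means one of the form $2\nu+1=(2\nu_1+1,\ldots,2\nu_d+1)$, i.e.\ a partition with exactly $d$ parts, all of them odd; this definition does not depend on the parity of $d$, so your ``parity subtlety'' about $(1^d)$ is spurious. More importantly, among the hooks $(i,1^{n-i})$ occurring in $F_d({\mathfrak G})$ the odd ones are not only $(1^d)$: a hook with exactly $d$ parts is $(i,1^{d-1})$, and it is odd precisely when its first part $i$ is odd, so the odd hooks are $(2k+1,1^{d-1})$ for all $k\geq 0$, sitting in degrees $d+2k$. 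Their contribution is $z^d+z^{d+2}+z^{d+4}+\cdots=z^d/(1-z^2)$, which together with the even hooks $(2q)$ yields $(1+z^d)/(1-z^2)$, as in the paper. With your count the series would be $1/(1-z^2)+z^d$, which is \emph{not} $(1+z^d)/(1-z^2)$; the agreement you assert does not hold, and this miscount is also what causes the confusion in your final paragraph about which degrees carry a second invariant.

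Once the enumeration is corrected, the remainder of your argument coincides with the paper's: $f^k$ and $St_df^k$ are nonzero $\text{SO}_d({\mathbb C})$-invariants whose degrees $2k$ and $d+2k$ exhaust the exponents of $(1+z^d)/(1-z^2)$ with the correct multiplicities, hence they form a basis and $f$, $St_d$ generate. Your observation that for $d$ even one must check the linear independence of $f^{(d+2k)/2}$ and $St_df^k$ in degree $d+2k$ is a legitimate point the paper leaves implicit; it is settled by noting that in the basis of $F_d({\mathfrak G})$ the element $St_d$ is a nonzero scalar multiple of $[x_1,x_2]\cdots[x_{d-1},x_d]$, so $St_df^k$ lies in the commutator ideal, whereas $f^{(d+2k)/2}$ contains the commutator-free basis monomial $x_1^{d+2k}$ with nonzero coefficient.
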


\begin{proof}
As in the proof of Proposition \ref{F(var E) action of O},
Theorem \ref{Hilbert series of classical groups} gives that
the one-dimensional contributions to the algebra $F_d({\mathfrak G})^{\text{O}_d({\mathbb C})}$
come from the even partitions $(i,1^{n-i})=(2q)$, $q=0,1,2,\ldots$,
and from the odd partitions $(i,1^{n-i})=(2k+1,1^{d-1})$, $k=0,1,2,\ldots$,
i.e.,
\[
H(F_d({\mathfrak G})^{\text{SO}_d({\mathbb C})},z)=(1+z^d)(1+z^2+z^4+\cdots)=\frac{1+z^d}{1-z^2}.
\]
Since the standard polynomial $St_d=St_d(x_1,\ldots,x_d)$ is an $\text{SO}_d({\mathbb C})$-invariant we derive that
$F_d({\mathfrak G})^{\text{SO}_d({\mathbb C})}$ has a basis
\[
\{1,f,f^2,\ldots\}\cup\{St_d,St_df,St_df^2,\ldots\}
\]
and hence is generated by $f$ and $St_d$.
\end{proof}

\begin{remark}\label{F(var E) action of SL and UT}
Applying ideas from \cite{BBDGK} we obtain that
\[
H(F_d({\mathfrak G})^{\text{SL}_d({\mathbb C})},z)=1+z^d
\]
and $F_d({\mathfrak G})^{\text{SL}_d({\mathbb C})}$ has a basis consisting of 1 and $St_d(x_1,\ldots,x_d)$.

For the unitriangular group $\text{UT}_d({\mathbb C})$ we have
\[
H(F_d({\mathfrak G})^{\text{UT}_d({\mathbb C})},z)=(1+z+z^2+\cdots)(1+z^2+z^3+\cdots+z^d)=\frac{1+z^2+z^3+\cdots+z^d}{1-z}
\]
and $F_d({\mathfrak G})^{\text{UT}_d({\mathbb C})}$ is generated by $x_1$ and $St_n(x_1,\ldots,x_n)$, $n=2,3,\ldots,d$.
\end{remark}

\subsection{The relatively free algebra $F_d({\mathfrak T})$}
By a theorem of Maltsev \cite{Ma} the polynomial identities of the algebra of $c\times c$ upper triangular matrices follow from the polynomial identity
\[
[x_1,x_2]\cdots[x_{2c-1},x_{2c}]=0.
\]
In the special case $c=2$ the cocharacter sequence
of the variety ${\mathfrak T}$ was computed by Mishhenko, Regev, and Zaicev \cite{MiRZ}:
\[
\chi_n({\mathfrak T})=\sum_{\lambda\vdash n}m_{\lambda}({\mathfrak T})\chi_{\lambda},
\]
where
\[
m_{\lambda}({\mathfrak T})=\begin{cases}
1, & \text{ if } \lambda = (n);\\
\lambda_1 - \lambda_2 + 1, & \text{ if } \lambda = (\lambda_1, \lambda_2), \lambda_2 > 0; \\
\lambda_1 - \lambda_2 + 1, & \text{ if } \lambda = (\lambda_1, \lambda_2, 1);\\
0 & \text{ in all other cases.}
\end{cases}
\]

\begin{proposition}\label{F(N2A) action of Sp}
Let $d=2p$. Then the Hilbert series of $F_d({\mathfrak T})^{\text{\rm Sp}_d({\mathbb C})}$ is
\[
H(F_d({\mathfrak T})^{\text{\rm Sp}_d({\mathbb C})},z)=\frac{1}{1-z^2}.
\]
The algebra $F_d({\mathfrak T})^{\text{\rm Sp}_d({\mathbb C})}$ is not finitely generated.
A set of generators can be defined inductively by
\[
f_1=[x_1,x_{p+1}]+[x_2,x_{p+2}]+\cdots+[x_p,x_{2p}]=\sum_{i=1}^p[x_i,x_{p+i}],
\]
\[
f_{n+1}=\sum_{i=1}^px_if_nx_{p+i}-x_{p+i}f_nx_i,\quad n=1,2,\ldots .
\]
\end{proposition}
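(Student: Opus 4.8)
The plan is to split the argument into the computation of the Hilbert series and the construction of generators. For the Hilbert series, I would apply Theorem~\ref{Hilbert series of classical groups}: by the cocharacter formula of Mishchenko, Regev, and Zaicev, the $\text{GL}_d({\mathbb C})$-module structure of $F_d({\mathfrak T})$ is
\[
F_d({\mathfrak T})={\mathbb C}+\sum_{n\geq 1}\Bigl(V_d(n)+\sum_{\lambda_2>0}(\lambda_1-\lambda_2+1)V_d(\lambda_1,\lambda_2)+\sum (\lambda_1-\lambda_2+1)V_d(\lambda_1,\lambda_2,1)\Bigr).
\]
For $\text{Sp}_d({\mathbb C})$ with $d=2p\geq 4$, Theorem~\ref{Hilbert series of classical groups} says I must sum the multiplicities $m_{n\lambda}$ over partitions $\lambda$ all of whose columns of $[\lambda]$ have even length. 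Among the partitions appearing above, those with at most two nonzero parts (or the type $(\lambda_1,\lambda_2,1)$) and with all column lengths even are exactly $(\mu_1,\mu_1)$, i.e. partitions of the form $\lambda_1=\lambda_2$; the rows $(n)$ never qualify for $p\geq 1$, and the three-part shapes $(\lambda_1,\lambda_2,1)$ have a column of length $1$ or $3$, so they are excluded. For $\lambda=(k,k)$ the multiplicity is $m_{2k,(k,k)}=\lambda_1-\lambda_2+1=1$, contributing $z^{2k}$ for each $k\geq 0$ (with $k=0$ giving the constant term). Hence $H(F_d({\mathfrak T})^{\text{Sp}_d({\mathbb C})},z)=\sum_{k\geq 0}z^{2k}=1/(1-z^2)$. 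I should double-check the edge effect when $d$ is small, but the statement fixes $d=2p$ and the diagrams $(k,k)$ have at most $2\leq d$ rows, so no truncation occurs.

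Next I would verify that the proposed elements lie in the ring of invariants and are nonzero. That $f_1$ is $\text{Sp}_d({\mathbb C})$-invariant is the same computation as in Proposition~\ref{F(var E) action of Sp}: $f_1$ is (up to sign) the image of the symplectic form $\Omega$ under $W_d\otimes W_d\to [W_d,W_d]$, which is $\text{Sp}_d({\mathbb C})$-fixed by definition of $\text{Sp}_d({\mathbb C})$. For the inductive step I would show that the bilinear-in-the-$x_i$ operator $\Phi(a)=\sum_{i=1}^p(x_i a x_{p+i}-x_{p+i}ax_i)$ sends $\text{Sp}_d({\mathbb C})$-invariants to $\text{Sp}_d({\mathbb C})$-invariants: writing $\Phi(a)=\sum_{i,j}\Omega_{ij}x_iax_j$ where $\Omega=(\Omega_{ij})$, for $g\in\text{Sp}_d({\mathbb C})$ one has $g\cdot\Phi(a)=\sum\Omega_{ij}(gx_i)a(gx_j)=\sum(\sum_{k,l}\Omega_{ij}g_{ki}g_{lj})x_k a x_l=\sum(g^t\Omega g)_{kl}^{\text{T}}x_kax_l$; using $g^t\Omega g=\Omega$ gives $g\cdot\Phi(a)=\Phi(a)$ when $a$ is already invariant. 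So each $f_n\in F_d({\mathfrak T})^{\text{Sp}_d({\mathbb C})}$, and $f_n$ is homogeneous of degree $2n$.

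To finish I must establish that $\{f_n:n\geq 1\}\cup\{1\}$ is a basis of the invariants and that the algebra is not finitely generated. For the basis claim: the Hilbert series shows $\dim (F_d({\mathfrak T})^{\text{Sp}_d({\mathbb C})})^{(2n)}=1$ and the odd components vanish, so it suffices to show $f_n\neq 0$ in $F_d({\mathfrak T})$ for every $n$. Here I would use the explicit normal-form basis available for $F_d({\mathfrak T})$ (consequence of Maltsev's identity $[x_1,x_2][x_3,x_4]=0$): a spanning set consists of $x_1^{a_1}\cdots x_d^{a_d}$ together with $x_1^{a_1}\cdots x_d^{a_d}[x_i,x_j]x_1^{b_1}\cdots x_d^{b_d}$, and products of two or more commutators vanish. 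Since $f_1$ is a single commutator and $\Phi$ multiplies on the left and right by generators without introducing new commutators, $f_n=\sum_{i_1,\dots,i_{n}}(\pm)x_{\ast}\cdots x_{\ast}[x_{i_1\vphantom{p}},x_{p+i_1}]x_{\ast}\cdots x_{\ast}$, and I would identify a particular monomial (e.g. the term $x_1^{\,n-1}[x_1,x_{p+1}]x_{p+1}^{\,n-1}$) whose coefficient is $\pm1$ and which is a basis element of the normal form, hence not killed; so $f_n\neq0$. That forces $\{1,f_1,f_2,\dots\}$ to be a basis of the invariant ring. The non-finite-generation then follows from the basis structure: since $f_1f_n$ has degree $2n+2$ and the degree-$(2n+2)$ component is one-dimensional spanned by $f_{n+1}$, we need $f_1 f_n=c\,f_{n+1}$ for a scalar $c$; if $c\neq0$ for all $n$ the ring would be the polynomial ring ${\mathbb C}[f_1]$, contradicting... — so the key point is that $f_1f_n=0$ in $F_d({\mathfrak T})$ (a product of two elements each containing a commutator, hence a product of $\geq2$ commutators, which vanishes by Maltsev), and more generally any product $f_mf_n=0$. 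Thus the subalgebra generated by $f_1,\dots,f_N$ contains no element of degree $2(N{+}1)$ except $0$, so $f_{N+1}$ is not in it for any $N$, proving $F_d({\mathfrak T})^{\text{Sp}_d({\mathbb C})}$ is not finitely generated.

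The main obstacle I anticipate is the careful bookkeeping in the last paragraph: rigorously controlling $f_n$ in the normal-form basis of $F_d({\mathfrak T})$ to simultaneously conclude $f_n\neq0$ and $f_mf_n=0$, and making sure the map $\Phi$ genuinely lands inside $F_d({\mathfrak T})$ rather than just the free algebra (i.e. that no hidden cancellations with the identity $[x_1,x_2][x_3,x_4]=0$ cause $f_n$ to collapse). Everything else — the Hilbert-series count and the invariance of the $f_n$ — is routine.
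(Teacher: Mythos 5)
Your proposal is correct and follows essentially the same route as the paper: sum the Mishchenko--Regev--Zaicev multiplicities over the even-column partitions $(q,q)$ to get $1/(1-z^2)$, check that the $f_n$ are $\text{Sp}_d(\mathbb C)$-invariant and nonzero (hence a basis, by the Hilbert series), and deduce non-finite generation from $f_mf_n=0$ since the commutator ideal of $F_d(\mathfrak T)$ has square zero. The only place you go beyond the paper's terse ``it is easy to see'' is the normal-form bookkeeping for $f_n\neq 0$; just note that your spanning set is not a basis, so either reduce to the standard basis or, more cleanly, evaluate on generic upper triangular matrices, where $f_n$ has upper-right entry $\bigl(\sum_i(y_iz_{p+i}-y_{p+i}z_i)\bigr)^{n-1}$ times that of $f_1$, which is visibly nonzero.
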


\begin{proof}
As in the previous subsection the nonzero coefficients of the Hilbert series come from the partitions
$\mu^2=(\mu_1,\mu_1,\mu_2,\mu_2,\ldots,\mu_p,\mu_p)$.
In our case these partitions are $\mu^2=(q,q)$, $q=0,1,2,\ldots$, and all they are of multiplicity 1.
This gives the Hilbert series $H(F_d({\mathfrak T})^{\text{\rm Sp}_d({\mathbb C})},z)$.
As in the case of $F_d({\mathfrak G})^{\text{\rm Sp}_d({\mathbb C})}$ it is easy to see that the elements $f_n$,
$n=0,1,2,\ldots$, are $\text{\rm Sp}_d({\mathbb C})$-invariants and they form a basis of
$F_d({\mathfrak G})^{\text{\rm Sp}_d({\mathbb C})}$. Since $f_mf_n=0$ for $m,n>0$, we derive that the algebra
of invariants is not finitely generated.
\end{proof}

\begin{proposition}\label{F(N2A) action of O}
The Hilbert series of the algebra $F_d({\mathfrak T})^{\text{\rm O}_d({\mathbb C})}$ is
\[
H(F_d({\mathfrak T})^{\text{\rm O}_d({\mathbb C})},z)=\frac{1-2z^2+2z^4}{(1-z^2)^3}.
\]
The algebra $F_d({\mathfrak T})^{\text{\rm O}_d({\mathbb C})}$ is not finitely generated.
\end{proposition}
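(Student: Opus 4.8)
The plan is to apply Theorem~\ref{Hilbert series of classical groups} to the known cocharacter data of ${\mathfrak T}$ and then argue non--finite generation via a dimension/valence count analogous to Proposition~\ref{F(N2A) action of Sp}. First I would identify which irreducible $\text{GL}_d({\mathbb C})$-modules occurring in $F_d({\mathfrak T})$ are indexed by \emph{even} partitions, since by Theorem~\ref{Hilbert series of classical groups} these are exactly the ones contributing (with multiplicity $m_{\lambda}({\mathfrak T})$) to $H(F_d({\mathfrak T})^{\text{O}_d({\mathbb C})},z)$. From the Mishchenko--Regev--Zaicev formula the partitions with nonzero multiplicity are $(n)$, $(\lambda_1,\lambda_2)$ with $\lambda_2>0$, and $(\lambda_1,\lambda_2,1)$. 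Among these the even ones are: $(2k)$ for $k\geq 0$ (multiplicity $1$); $(2a,2b)$ with $a\geq b\geq 1$ (multiplicity $2a-2b+1$); and there are no even partitions of the shape $(\lambda_1,\lambda_2,1)$ since the last part is odd. (One must also keep $d\geq 3$ in mind so that shapes with three rows are actually available; for $d=2$ only the first two families survive, but the resulting series is the same rational function once $d\geq 2$, as the three-row family never contributes.)

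Next I would assemble the generating function. The contribution of $(2k)$, $k\geq 0$, is $\sum_{k\geq 0}z^{2k}=\dfrac{1}{1-z^2}$. The contribution of the two-row even partitions is $\sum_{a\geq b\geq 1}(2a-2b+1)z^{2a+2b}$; substituting $j=a-b\geq 0$ and $b\geq 1$ gives $\sum_{j\geq 0}\sum_{b\geq 1}(2j+1)z^{2j+4b}=\Big(\sum_{j\geq 0}(2j+1)z^{2j}\Big)\cdot\dfrac{z^4}{1-z^4}$. Using $\sum_{j\geq 0}(2j+1)x^{j}=\dfrac{1+x}{(1-x)^2}$ with $x=z^2$, this equals $\dfrac{(1+z^2)z^4}{(1-z^2)^2(1-z^4)}=\dfrac{z^4}{(1-z^2)^3}$. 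Adding the two pieces yields $\dfrac{1}{1-z^2}+\dfrac{z^4}{(1-z^2)^3}=\dfrac{(1-z^2)^2+z^4}{(1-z^2)^3}=\dfrac{1-2z^2+2z^4}{(1-z^2)^3}$, which is the claimed Hilbert series. I would present the $(2k)$ and $(2a,2b)$ sums as the two displayed steps and then combine.

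For the non--finite generation statement the strategy mirrors Proposition~\ref{F(N2A) action of Sp}: exhibit invariants that multiply to zero in sufficiently high degree so that the algebra cannot be generated by finitely many elements. Concretely, $F_d({\mathfrak T})$ satisfies $[x_1,x_2][x_3,x_4]=0$, so any product of (polynomial-degree-$\geq 1$) invariants built from enough commutator factors must vanish; more precisely, because the ideal generated by a single commutator lies in the $T$-ideal and every homogeneous invariant of positive degree in the ``commutator part'' is annihilated on the left and right by further commutators, one gets infinite families $\{h_n\}$ of linearly independent homogeneous invariants with $h_mh_n=0$ for all $m,n$ beyond a fixed bound. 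The cleanest route is to write down explicit invariants (e.g.\ $f=x_1^2+\cdots+x_d^2$, which is an $\text{O}_d$-invariant already identified in Proposition~\ref{F(var E) action of O}, together with commutator-type invariants analogous to the $f_n$ of Proposition~\ref{F(N2A) action of Sp}) and observe that the commutator-type invariants pairwise annihilate, so the subalgebra they generate is locally nilpotent while the Hilbert series grows polynomially of degree $3$ — incompatible with finite generation. I expect the main obstacle to be precisely this last part: unlike the $\text{Sp}_d$ case, the $\text{O}_d$-invariants include the non-nilpotent element $f$, so one cannot simply say ``all products vanish''; one must carefully decompose a would-be finite generating set, separate the ``$f$-powers'' direction from the ``commutator'' directions using the basis of $F_d({\mathfrak T})$, and show the rank of the degree-$n$ piece (a cubic polynomial in $n$) outstrips what finitely many generators subject to the annihilation relations can produce. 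Getting the bookkeeping of which partition shapes feed into which generators, and verifying that the candidate commutator invariants are indeed $\text{O}_d$-invariant and nonzero, is the part that needs genuine (though not deep) work.
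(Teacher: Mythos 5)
Your computation of the Hilbert series is correct and is exactly the paper's: by Theorem~\ref{Hilbert series of classical groups} one sums the multiplicities $m_{\lambda}({\mathfrak T})$ over the even partitions, namely $(2k)$ with multiplicity $1$ and $(2a,2b)$, $a\geq b\geq 1$, with multiplicity $2a-2b+1$ (the shapes $(\lambda_1,\lambda_2,1)$ are never even), giving $\frac{1}{1-z^2}+\frac{z^4}{(1-z^2)^3}=\frac{1-2z^2+2z^4}{(1-z^2)^3}$, which agrees with the paper's derivative trick. For the non--finite generation, your final plan is also the paper's argument, but you only sketch it, so let me record how it is closed and fix one slip. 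Since $[x_1,x_2][x_3,x_4]=0$ holds in ${\mathfrak T}$, the commutator ideal satisfies $F_d'({\mathfrak T})^2=0$; hence the $\text{O}_d({\mathbb C})$-invariants inside $F_d'({\mathfrak T})$ form a bimodule over $S({\mathbb C}f)$ with $f=x_1^2+\cdots+x_d^2$, and if this bimodule is generated by a homogeneous system $\{w_j\mid j\in J\}$, then the invariant algebra is spanned by the elements $f^q$ and $f^qw_jf^r$, so its Hilbert series is bounded coefficientwise by $\frac{1}{1-z^2}+\frac{1}{(1-z^2)^2}\sum_{j\in J}z^{\deg(w_j)}$. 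If the algebra of invariants were finitely generated then (again because $F_d'({\mathfrak T})^2=0$, after splitting each generator into its ${\mathbb C}[f]$-part and its commutator-ideal part) $J$ could be taken finite, making the bound a rational function with a pole of order at most $2$ at $z=1$, whereas the computed series has a pole of order $3$; this contradiction finishes the proof, and no explicit commutator-type invariants analogous to the $f_n$ of Proposition~\ref{F(N2A) action of Sp} need to be exhibited. The quantitative slip: the coefficients of $\frac{1-2z^2+2z^4}{(1-z^2)^3}$ grow quadratically in the degree, not cubically (the pole at $z=1$ has order $3$); this is still ample, since finitely many bimodule generators would allow at most linear growth.
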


\begin{proof}
Again, the $n$th coefficient of the Hilbert series $H(F_d({\mathfrak T}_2{\mathfrak A})^{\text{\rm O}_d({\mathbb C})},z)$
is equal to the sum of the multiplicities $m_{\lambda}({\mathfrak T})$ of the even partitions $\lambda$ of $n$.
Hence
\[
H(F_d({\mathfrak T})^{\text{O}_d({\mathbb C})},z)=(1+z^2+z^4+\cdots)+z^4(1+z^4+z^8+\cdots)\sum_{i\geq 0}(2i+1)z^{2i}
\]
\[
=\frac{1}{1-z^2}+\frac{z^4}{1-z^4}\frac{d}{dz}\sum_{i\geq 0}z^{2i+1}
=\frac{1}{1-z^2}+\frac{z^4}{1-z^4}\frac{d}{dz}\frac{z}{1-z^2}=\frac{1-2z^2+2z^4}{(1-z^2)^3}.
\]
As in the case of $F_d({\mathfrak G})^{\text{\rm O}_d({\mathbb C})}$ the element
\[
f=x_1^2+\cdots+x_d^2
\]
is an $\text{\rm O}_d({\mathbb C})$-invariant
and its powers give the contribution $1+z^2+z^4+\cdots$ to the Hilbert series. The $\text{\rm O}_d({\mathbb C})$-invariants
in the commutator ideal $F_d'({\mathfrak T})$ of $F_d({\mathfrak T})$ form an
$S({\mathbb C}f)$-bimodule. If this bimodule is generated by the homogeneous system $\{w_j\mid j\in J\}$, then
$F_d'({\mathfrak T})^{\text{\rm O}_d({\mathbb C})}$ is spanned as a vector space by
\[
\{f^qw_jf^r\mid q,r\geq 0,j\in J\}
\]
and the coefficients of the Hilbert series are bounded from above by the coefficients of the series
\[
\frac{1}{1-z^2}+\frac{1}{(1-z^2)^2}\sum_{j\in J}z^{\deg(w_j)}.
\]
Comparing this expression with the already computed Hilbert series we obtain
\[
\sum_{j\in J}z^{\deg(w_j)}\geq \frac{z^4}{1-z^4}=z^4+z^8+z^{12}+\cdots,
\]
where the inequality between the series means an inequality between the corresponding coefficients.
Since $F_d'({\mathfrak T})^2=0$ this implies that the algebra
$F_d({\mathfrak T})^{\text{\rm O}_d({\mathbb C})}$ is not finitely generated.
\end{proof}

The proof of the following proposition is similar to the proof of the previous one.

\begin{proposition}\label{F(N2A) action of SO}
The Hilbert series of the algebra $F_d({\mathfrak T})^{\text{\rm SO}_d({\mathbb C})}$ is
\[
H(F_d({\mathfrak T})^{\text{\rm SO}_d({\mathbb C})},z)=
\begin{cases}\displaystyle \frac{1-z^2+2z^4}{(1-z^2)^3},&\text{ if }d=2;\\
\displaystyle \frac{1-2z^2+z^3+2z^4}{(1-z^2)^3},&\text{ if }d=3;\\
H(F_d({\mathfrak T})^{\text{\rm O}_d({\mathbb C})},z),&\text{ if }d>3.
\end{cases}
\]
The algebra $F_d({\mathfrak T})^{\text{\rm SO}_d({\mathbb C})}$ is not finitely generated.
\end{proposition}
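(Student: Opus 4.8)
The plan is to mirror the proofs of Propositions \ref{F(N2A) action of Sp} and \ref{F(N2A) action of O}, adapting the bookkeeping to the special orthogonal group. First I would use Theorem \ref{Hilbert series of classical groups}: the $n$th coefficient of $H(F_d({\mathfrak T})^{\text{SO}_d({\mathbb C})},z)$ is the sum $\sum_{2\nu}m_{n,2\nu}({\mathfrak T})+\sum_{2\nu+1}m_{n,2\nu+1}({\mathfrak T})$ over even and odd partitions of $n$ in at most $d$ parts. The even-partition contribution is exactly the one already computed in Proposition \ref{F(N2A) action of O}, namely $(1-2z^2+2z^4)/(1-z^2)^3$. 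So everything reduces to enumerating the odd partitions $\lambda=(2\nu_1+1,\ldots,2\nu_d+1)$ appearing among $(n),(\lambda_1,\lambda_2),(\lambda_1,\lambda_2,1)$ — the only shapes with $m_\lambda({\mathfrak T})\neq 0$ — and this is where the case split on $d$ comes from. An odd partition must have exactly $d$ (positive, odd) parts, so for $d=2$ the relevant shapes are $\lambda=(\lambda_1,\lambda_2)$ with both parts odd, contributing $\sum m_\lambda z^{\lambda_1+\lambda_2}$ with $m_\lambda=\lambda_1-\lambda_2+1$; for $d=3$ they are $\lambda=(\lambda_1,\lambda_2,1)$ with $\lambda_1,\lambda_2$ odd and $m_\lambda=\lambda_1-\lambda_2+1$; and for $d>3$ no shape of nonzero multiplicity has $d$ parts, so the odd contribution vanishes and $H(F_d({\mathfrak T})^{\text{SO}_d({\mathbb C})},z)=H(F_d({\mathfrak T})^{\text{O}_d({\mathbb C})},z)$.

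For the summation I would, as in Proposition \ref{F(N2A) action of O}, write the $d=2$ odd part as $\sum_{k\geq j\geq 1,\ \text{odd}}(k-j+1)z^{k+j}$; substituting $k=2a+1$, $j=2b+1$ with $a\geq b\geq 0$ turns this into $z^2\sum_{a\geq b\geq 0}(2(a-b)+1)z^{2a+2b}$, which factors as $z^2\cdot\frac{1}{1-z^2}\cdot\sum_{i\geq 0}(2i+1)z^{2i}=\frac{z^2}{1-z^2}\cdot\frac{1+z^2}{(1-z^2)^2}$ by the same differentiation trick used before. Adding this to $(1-2z^2+2z^4)/(1-z^2)^3$ and simplifying should yield $(1-z^2+2z^4)/(1-z^2)^3$. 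For $d=3$ the odd shape is $(\lambda_1,\lambda_2,1)$ with $\lambda_1\geq\lambda_2\geq 1$ both odd; the exponent is $\lambda_1+\lambda_2+1$, so the generating function is $z\cdot z^2\sum_{a\geq b\geq 0}(2(a-b)+1)z^{2a+2b}=\frac{z^3(1+z^2)}{(1-z^2)^3}$, and adding this to the even part gives $(1-2z^2+z^3+2z^4)/(1-z^2)^3$.

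For the non–finite-generation claim I would argue exactly as in Proposition \ref{F(N2A) action of O}. The element $f=x_1^2+\cdots+x_d^2$ is an $\text{SO}_d({\mathbb C})$-invariant whose powers span the "commutative" part, contributing $1/(1-z^2)$; in the case $d=3$ one also has the invariant $St_3(x_1,x_2,x_3)$ of degree $3$ (cf.\ Proposition \ref{F(var E) action of SO}). The $\text{SO}_d({\mathbb C})$-invariants lying in the commutator ideal $F_d'({\mathfrak T})$ form a bimodule over (the subalgebra generated by $f$, and by $St_3$ when $d=3$), and since $F_d'({\mathfrak T})^2=0$ this bimodule is annihilated by multiplication among its own elements; if it were finitely generated as such a bimodule the coefficients of $H(F_d({\mathfrak T})^{\text{SO}_d({\mathbb C})},z)$ would be dominated by a rational function of the form $\frac{1}{1-z^2}+\frac{1+z^d}{(1-z^2)^2}\sum_{j\in J}z^{\deg w_j}$ (the factor $1+z^d$ accounting for the optional $St_d$), forcing $\sum_{j\in J}z^{\deg w_j}$ to have infinitely many nonzero coefficients when compared against the Hilbert series just computed. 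Hence $F_d({\mathfrak T})^{\text{SO}_d({\mathbb C})}$ is not finitely generated. The main obstacle I anticipate is purely clerical: keeping the three cases ($d=2$, $d=3$, $d>3$) and the even/odd contributions correctly separated in the summations, and getting the rational-function simplifications to land on exactly the stated numerators.
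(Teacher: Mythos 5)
Your overall strategy is exactly the intended one (the paper only says the proof is ``similar to the previous one''): apply Theorem \ref{Hilbert series of classical groups}, keep the even-partition contribution already computed in Proposition \ref{F(N2A) action of O}, observe that an odd partition must have exactly $d$ positive odd parts so that only $(\lambda_1,\lambda_2)$ (for $d=2$) and $(\lambda_1,\lambda_2,1)$ (for $d=3$) can occur and nothing occurs for $d>3$, and then run the same bimodule/Hilbert-series growth argument for non--finite generation. However, your key summation contains an arithmetic slip that makes the displayed intermediate formulas inconsistent with the final answer you assert. Writing $a=b+i$ one gets
\[
\sum_{a\geq b\geq 0}(2(a-b)+1)z^{2a+2b}=\sum_{b\geq 0}z^{4b}\sum_{i\geq 0}(2i+1)z^{2i}
=\frac{1}{1-z^{4}}\cdot\frac{1+z^{2}}{(1-z^{2})^{2}}=\frac{1}{(1-z^{2})^{3}},
\]
i.e.\ the geometric factor is $1/(1-z^{4})$ (as in the $z^{4}/(1-z^{4})$ bookkeeping in the proof of Proposition \ref{F(N2A) action of O}), not $1/(1-z^{2})$ as you wrote. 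With the correct factor the odd contributions are $z^{2}/(1-z^{2})^{3}$ for $d=2$ and $z^{3}/(1-z^{2})^{3}$ for $d=3$, and adding $(1-2z^{2}+2z^{4})/(1-z^{2})^{3}$ does give the stated numerators $1-z^{2}+2z^{4}$ and $1-2z^{2}+z^{3}+2z^{4}$. With your factor the totals would instead be $(1-z^{2}+3z^{4})/(1-z^{2})^{3}$ and $(1-2z^{2}+z^{3}+2z^{4}+z^{5})/(1-z^{2})^{3}$, so as written your derivation contradicts the formulas you then claim; the final equalities are asserted, not obtained. This is the one genuine gap and it is easily repaired.

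Two smaller remarks on the non--finite-generation part. For $d>3$ you do not need a new argument: since $\text{SO}_d({\mathbb C})\subset\text{O}_d({\mathbb C})$ and the Hilbert series coincide, the two invariant algebras are equal and Proposition \ref{F(N2A) action of O} applies verbatim. For $d=3$, note that $St_3$ lies in the commutator ideal $F_3'({\mathfrak T})$, and since $F_3'({\mathfrak T})^2=0$ it acts trivially on the bimodule, so the extra factor $1+z^{d}$ in your upper bound is unnecessary (though harmless): any finite homogeneous generating set would bound the Hilbert series by $\frac{1}{1-z^{2}}+\frac{P(z)}{(1-z^{2})^{2}}$ with $P$ a polynomial, whose coefficients grow linearly, whereas the computed series has a triple pole at $z^{2}=1$ and quadratically growing coefficients; this is the same contradiction as in the paper's argument.
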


\begin{remark}\label{F(N2A) action of SL and UT}
As in Remark \ref{F(var E) action of SL and UT} one can compute the Hilbert series of
$F_d({\mathfrak T})^{\text{\rm SL}_d({\mathbb C})}$ and
$F_d({\mathfrak T})^{\text{\rm UT}_d({\mathbb C})}$:
\[
H(F_d({\mathfrak T})^{\text{\rm SL}_d({\mathbb C})},z)=
\begin{cases}
\displaystyle \frac{1}{1-z^2},&\text{ if }d=2;\\
1+z^3,&\text{ if }d=3;\\
1,&\text{ if }d>3,
\end{cases}
\]
\[
H(F_d({\mathfrak T})^{\text{\rm UT}_d({\mathbb C})},z)=
\begin{cases}
\displaystyle \frac{1}{1-z}+\frac{z^2}{(1-z)^2(1-z^2)},&\text{ if }d=2;\\
\displaystyle \frac{1}{1-z}+\frac{z^2}{(1-z)^3},&\text{ if }d\geq 3.\\
\end{cases}
\]
The algebras $F_2({\mathfrak T})^{\text{\rm SL}_2({\mathbb C})}$
and $F_d({\mathfrak T})^{\text{\rm UT}_d({\mathbb C})}$ are not finitely generated.
\end{remark}

\section{Other actions of $\text{\rm GL}_d({\mathbb C})$}
In this section we compute the Hilbert series of the algebras $F_m({\mathfrak G})^G$ and $F_m({\mathfrak T})^G$
when $G=\text{\rm Sp}_d({\mathbb C})$ ($d$ even), $\text{\rm O}_d({\mathbb C})$, and $\text{\rm SO}_d({\mathbb C})$
and for several noncanonical actions of the group $\text{\rm GL}_d({\mathbb C})$ on $W_m$. The most important step of
the calculations is to find the multiplicity series $M(F_m({\mathfrak G}),T_d,z)$ and $M(F_m({\mathfrak T}),T_d,z)$
and their counterparts $M'(F_m({\mathfrak G}),U_d,z)$ and $M'(F_m({\mathfrak T}),U_d,z)$). These computations use the methods in \cite{BBDGK}.

\subsection{The algebra $F_m({\mathfrak G})$}
The Hilbert series of the algebra $F_m({\mathfrak G})$ which counts the canonical action of $\text{GL}_m({\mathbb C})$ is
\[
H_{\text{GL}_m({\mathbb C})}(F_m({\mathfrak G}),T_m,z)=\frac{1}{2}+\frac{1}{2}\prod_{i=1}^m\frac{1+t_iz}{1-t_iz}.
\]

\begin{example}\label{F(E) lambda (2)}
Let as a $\text{GL}_2({\mathbb C})$-module $W_3$ be isomorphic to $V_2(2)$. Then
\[
H_{\text{GL}_2({\mathbb C})}(F_3({\mathfrak G}),T_2,z)=\frac{1}{2}+\frac{(1+t_1^2z)(1+t_1t_2z)(1+t_2^2z)}{2(1-t_1^2z)(1-t_1t_2z)(1-t_2^2z)},
\]
\[
M(F_3({\mathfrak G}),T_2,z)=\frac{1}{1-t_1^2z}+\frac{t_1^2t_2(t_1+t_2)z^2}{(1-t_1^2z)(1-t_1t_2z)},
\]
\[
M'(F_3({\mathfrak G}),U_2,z)=\frac{1}{1-u_1^2z}+\frac{u_2(u_1^2+u_2)z^2}{(1-u_1^2z)(1-u_2z)}.
\]
Applying Theorem \ref{Hilbert series via multiplicity series} we obtain
\[
H(F_3({\mathfrak G})^{\text{\rm Sp}_2({\mathbb C})},z)=M'(F_3({\mathfrak G}),0,1,z)
=1+\frac{z^2}{1-z},
\]
\[
M_1(F_3({\mathfrak G}),t_2,z)=\frac{1}{2}(M(F_3({\mathfrak G}),-1,t_2,z)+M(F_3({\mathfrak G}),1,t_2,z))
\]
\[
=\frac{1}{1-z}+\frac{t_2^2z^2(1+z)}{(1-z)(1-t_2^2z^2)},
\]
\[
H(F_3({\mathfrak G})^{\text{\rm O}_d({\mathbb C})},z)=M_2(F_3({\mathfrak G}),z)
=\frac{1}{1-z}+\frac{z^2}{(1-z)^2},
\]
\[
M'_1(F_3({\mathfrak G}),u_2,z)=\frac{1}{2}(M'(F_3({\mathfrak G}),-1,u_2,z)+M'(F_3({\mathfrak G}),1,u_2,z))
\]
\[
=\frac{1}{1-z}+\frac{u_2(1+u_2)z^2}{(1-z)(1-u_2z)},
\]
\[
H(F_3({\mathfrak G})^{\text{\rm SO}_2({\mathbb C})},z)=M'_2(F_3({\mathfrak G}),z)=M'_1(F_3({\mathfrak G}),1,z)
=\frac{1}{1-z}+\frac{2z^2}{(1-z)^2}.
\]
\end{example}

\begin{example}\label{F(E) lambda (1+1)}
Let as a $\text{GL}_2({\mathbb C})$-module $W_4$ be isomorphic to $V_2(1)\oplus V_2(1)$. Then
\[
H_{\text{GL}_2({\mathbb C})}(F_4({\mathfrak G}),T_2,z)=\frac{1}{2}+\frac{(1+t_1z)^2(1+t_2z)^2}{2(1-t_1z)^2(1-t_2z)^2},
\]
\[
M(F_4({\mathfrak G}),T_2,z)=\frac{1+t_1(t_1+3t_2)z^2+2t_1^2t_2z^3+t_1^2t_2(-t_1+4t_2)z^4-2t_1^3t_2^2z^5}{(1-t_1z)^2(1-t_1t_2z^2)},
\]
\[
M'(F_3({\mathfrak G}),U_2,z)=\frac{1+(u_1^2+3u_2)z^2+2u_1u_2z^3+u_2(-u_1^2+4u_2)z^4-2u_1u_2^2z^5}{(1-u_1z)^2(1-u_2z^2)}.
\]
\[
H(F_4({\mathfrak G})^{\text{\rm Sp}_2({\mathbb C})},z)=M'(F_4({\mathfrak G}),0,1,z)
=\frac{1+3z^2+4Z^4}{1-z^2},
\]
\[
H(F_4({\mathfrak G})^{\text{\rm O}_2({\mathbb C})},z)=\frac{1+z^2+7z^4-z^6}{(1-z^2)^3},
\]
\[
H(F_4({\mathfrak G})^{\text{\rm SO}_2({\mathbb C})},z)=\frac{1+5z^2+11z^4-z^6}{(1-z^2)^3}.
\]
\end{example}

\subsection{The algebra $F_m({\mathfrak T})$}
The Hilbert series of $F_m({\mathfrak T})$ is
\[
H_{\text{GL}_m({\mathbb C})}(F_m({\mathfrak T}),T_m,z)=2\prod_{i=1}^m\frac{1}{1-t_iz}+((t_1+\cdots+t_m)z-1)\prod_{i=1}^m\frac{1}{(1-t_iz)^2}.
\]
Most of the multiplicity series for $F_m({\mathfrak T})$ in the cases considered in the sequel
were computed in \cite{BBDGK} using the Hilbert series $H_{\text{\rm GL}_m({\mathbb C})}(F_m({\mathfrak T}),T_m,z)$.

\begin{example}\label{F3(T) lambda (1,1)}
Let as a $\text{GL}_3({\mathbb C})$-module $W_3$ be isomorphic to $V_3(1^2)$. Then
\[
M(F_3({\mathfrak T}),T_3,z)=\frac{1-t_1t_2z+t_1^2t_2^2(t_1+t_3)t_3z^3}{(1-t_1t_2z)^2(1-t_1^2t_2t_3z^2)},
\]
\[
M'(F_3({\mathfrak T}),U_3,z)=\frac{1-u_2z+(u_1u_2+u_3)u_3z^3}{(1-u_2z)^2(1-u_1u_3z^2)},
\]
\[
H(F_3({\mathfrak T})^{\text{O}_3({\mathbb C})},z)=H(F_3({\mathfrak T})^{\text{SO}_3({\mathbb C})},z)=\frac{1-z-z^2+2z^3}{(1-z)(1-z^2)^2}.
\]
\end{example}

\begin{example}\label{F4(T) lambda (1+1)}
Let as a $\text{GL}_2({\mathbb C})$-module $W_4$ be isomorphic to $V_2(1)\oplus V_2(1)$. Then
\[
M(F_4({\mathfrak T}),T_2,z)=\frac{1}{(1-t_1z)^4(1-t_1t_2z^2)^5}(1-2t_1z+t_1(2t_1-t_2)z^2+10t_1^2t_2z^3
\]
\[
+3t_1^2t_2(-3t_1+4t_2)z^4-34t_1^3t_2^2z^5
+t_1^3t_2^2(19t_1-8t_2)z^6+18t_1^4t_2^3z^7
\]
\[
+2t_1^4t_2^3(-4t_1+t_2)z^8-4t_1^5t_2^4z^9+2t_1^6t_2^4z^{10}),
\]
\[
M'(F_4({\mathfrak T}),U_2,z)=\frac{1}{(1-u_1z)^4(1-u_2z^2)^5}(1-2u_1z-u_2z^2+10u_1u_2z^3
\]
\[
+3u_2(-3u_1^2+4u_2)z^4-34u_1u_2^2z^5+u_2^2(19u_1^2-8u_2)z^6+2u_1^2z^2+18u_1u_2^3z^7
\]
\[
+2u_2^3(u_2-4u_1^2)z^8-4u_1u_2^4z^9+2u_1^2u_2^4z^{10}),
\]
\[
H(F_4({\mathfrak T})^{\text{Sp}_2({\mathbb C})},z)= \frac{1-z^2+12z^4-8z^6+2z^8}{(1-z^2)^5},
\]
\[
H(F_4({\mathfrak T})^{\text{O}_2({\mathbb C})},z)=\frac{1-3z^2+30z^4+2z^8}{(1-z^2)^7},
\]
\[
H(F_4({\mathfrak T})^{\text{SO}_2({\mathbb C})},z)=\frac{1+z^2+43z^4+19z^6-6z^8+2z^{10}}{(1-z^2)^7}.
\]
\end{example}

\begin{example}\label{F3(T) lambda (2)}
Let as a $\text{GL}_2({\mathbb C})$-module $W_3$ be isomorphic to $V_2(2)$. Then
\[
M(F_3({\mathfrak T}),T_2,z)=\frac{1-t_1(t_1+t_2)z+t_1^2t_2(2t_1-t_2)z^2+2t_1^3t_2^2(t_1+t_2)z^3-2t_1^5t_2^3z^4}{(1-t_1^2z)^2(1-t_1t_2z)(1-t_1^2t_2^2z^2)^2},
\]
\[
M'(F_3({\mathfrak T}),U_2,z)=\frac{1-(u_1^2+u_2)z+(2u_1^2-u_2)u_2z^2+2(u_1^2+u_2)u_2^2z^3-2u_1^2u_2^3z^4}{(1-u_1^2z)^2(1-u_2z)(1-u_2^2z^2)^2},
\]
\[
H(F_3({\mathfrak T})^{\text{O}_3({\mathbb C})},z)=\frac{1-2z+4z^3-2z^4}{(1-z)^3(1-z^2)^2},
\]
\[
=H(F_3({\mathfrak T})^{\text{SO}_3({\mathbb C})},z)=\frac{1-2z+z^2+4z^3-2z^4}{(1-z)^3(1-z^2)^2}.
\]
\end{example}

\begin{example}\label{F6(T) lambda (2)}
Let as a $\text{GL}_3({\mathbb C})$-module $W_6$ be isomorphic to $V_3(2)$. Then
\[
H(F_6({\mathfrak T})^{\text{O}_3({\mathbb C})},z)=H(F_6({\mathfrak T})^{\text{SO}_3({\mathbb C})},z)
\]
\[
=\frac{1-2z-z^2+4z^3+6z^4+2z^5-12z^6+z^7+6z^8+4z^9-2z^{10}-4z^{11}+2z^{12}}{((1-z)(1-z^2)(1-z^3))^3}.
\]
\end{example}

\section*{Acknowledgements}
The authors are very grateful to M\'aty\'as Domokos for the stimulating discussions.

\end{document}